\def\sqr#1#2{{\vcenter{\vbox{\hrule height.#2pt
\hbox{\vrule width.#2pt height#1pt \kern#1pt \vrule width.#2pt}
\hrule height.#2pt}}}}
\def\3n{\negthinspace \negthinspace \negthinspace }
\def\2n{\negthinspace \negthinspace }
\def\1n{\negthinspace }
   \def\cA{{\cal A}}  
     \def\Bc{{\bf c}}
\def\dbE{\mathbb{E}}
     \def\Bx{{\bf x}}
     \def\By{{\bf y}}
     \def\Bz{{\bf z}}
\def\ds{\displaystyle}
\def\={\buildrel \triangle \over =}
\def\q{\quad}
\def\max{\mathop{\rm max}}
\def\min{\mathop{\rm min}}
\def\wt{\widetilde}
\def\cd{\cdot}
\def\deq{\mathop{\buildrel\D\over=}}
\def\Re{{\mathop{\rm Re}\,}}
\crefname{section}{section}{sections}
\crefname{subsection}{subsection}{subsections}
\Crefname{section}{Section}{Sections}
\Crefname{subsection}{Subsection}{Subsections}
\crefname{condition}{Condition}{Conditions}
\Crefname{figure}{Figure}{Figures}
\newtheorem {theorem}{Theorem}[section]
\newtheorem {lemma}{{\bf Lemma}}[section]
\newtheorem {proposition}{{\bf Proposition}}[section]
\theoremstyle{remark}
\newtheorem {remark}{{\bf Remark}}[section]
\newtheorem {condition}{{\bf Condition}}[section]
\theoremstyle{definition}
\newtheorem {definition}{{\bf Definition}}[section]
\newtheorem {example}{{\bf Example}}[section]
\theoremstyle{plain} \numberwithin {equation}{section}
\numberwithin{assumption}{section}
\def\deq{\mathop{\buildrel\Delta\over=}}
\begin{document}

\title{\bf Exact Controllability for Stochastic First-Order Multi-Dimensional Hyperbolic
Systems\thanks{This work is  supported by the NSF of China under grants 12025105 and 12401589. }}
\author{
Zengyu Li \thanks{School of Mathematics, Sichuan
University, Chengdu, 610064, China.  {\small\it
E-mail:} {\small\tt lizengyu@stu.scu.edu.cn}},  \quad
Qi L\"{u}\thanks{School of Mathematics, Sichuan University, Chengdu, 610064, China. {\small\it E-mail:} {\small\tt lu@scu.edu.cn}. },  \quad
Yu Wang\thanks{School of Mathematics, Southwest Jiaotong University, Chengdu, 611756, China. {\small\it E-mail:} {\small\tt yuwangmath@163.com.}} \quad
and \quad
Haitian Yang\thanks{ Department of Mathematical Sciences, Tsinghua University, Beijing, 100084, China.  {\small\it E-mail:} {\small\tt yht21@mails.tsinghua.edu.cn.}}
}

\date{\today}

\maketitle

\begin{abstract}
This paper investigates the exact controllability problem for multi-dimensional stochastic first-order symmetric hyperbolic systems  with control inputs acting in two distinct ways: an  internal control  applied to the diffusion term and a  boundary control applied to the drift term. By means of a classical duality argument, the controllability problem is reduced to an observability estimate for the corresponding backward stochastic system. The main technical contribution is the establishment of a new global Carleman estimate for such backward systems, combined with a weighted energy identity. This enables us to prove the desired observability inequality under a geometric structural condition (Condition \ref{cond1}), which ensures that all characteristic rays propagate toward the boundary within a finite time. As a result, we obtain exact controllability provided the control time $T$ exceeds a sharp threshold $T_0$ given explicitly in terms of the system geometry. Furthermore, we complement the positive result with several negative controllability theorems, which demonstrate that both controls are necessary and must act in a distributed manner. Our analysis not only extends controllability theory from deterministic to stochastic multi-dimensional hyperbolic systems but also provides, as a byproduct, new results for deterministic systems under a structural hypothesis. Applications to stochastic traffic flow, epidemiological models, and shallow-water equations are discussed.
\end{abstract}

\noindent{\bf 2020 Mathematics Subject Classification}. 93B05, 93B07.

\bigskip

\noindent{\bf Key Words}. Stochastic symmetric hyperbolic systems,  exact controllability, observability, Carleman estimate.

\section{Introduction and main results} \label{S1}

First-order hyperbolic systems govern the evolution of numerous physical quantities. Representative examples include the Saint--Venant equations for open channels, the Aw--Rascle equations for road traffic, gas dynamics, supply chains, and heat exchangers \cite{Bartecki16, BC16, Li12}. Due to their wide range of applications, the control theory of such systems has attracted considerable attention from both mathematical and engineering communities. Extensive research has been devoted to the boundary controllability of deterministic first-order hyperbolic systems (see, e.g., \cite{Coron07, L10, Li16, Russell78} and the references therein). It is noteworthy, however, that existing controllability results are largely confined to one-dimensional (1-D) systems or multi-dimensional scalar equations.

In many realistic scenarios, uncertainties--due to environmental noise, parameter fluctuations, or unmodeled dynamics--cannot be ignored. Such randomness is naturally incorporated through stochastic forcing, leading to systems of stochastic hyperbolic equations  (e.g., \cite{Chapron25,Chow15, Da Prato14, Roach12}). The presence of noise not only affects the well-posedness and stability but also fundamentally alters the controllability properties, as the control must counteract both deterministic and stochastic dynamics. A key question arises:  Can we steer a stochastic multi-dimensional hyperbolic system from any initial state to any desired target state in finite time, and if so, under what conditions? 

In this paper, we address this question for a class of  stochastic first-order symmetric hyperbolic systems in a bounded spatial domain $G \subset \mathbb{R}^n$.  We extend the study of controllability from the deterministic 1-D and scalar multi-dimensional settings to the stochastic multi-dimensional system framework. Notably, a by-product of our analysis is a new controllability result for deterministic first-order multi-dimensional symmetric hyperbolic systems under a suitable structural assumption.

Before describing the control problem studied in this paper, let us first introduce some notations.

Let $ T > 0 $ be a fixed time horizon, and let $(\Omega, \mathcal{F}, \mathbf{F}, \mathbb{P})$ be a complete filtered probability space on which a one‑dimensional Brownian motion $W(\cdot)$ is defined. Here, $\mathbf{F} = \{\mathcal{F}_t\}_{t \in [0,T]}$ denotes the natural filtration generated by $W(\cdot)$ and augmented by all $\mathbb{P}$-null sets. We write $\mathbb{F}$ for the progressive $\sigma$-algebra associated with $\mathbf{F}$.

Let $\mathcal{H}$ be a Banach space. We introduce the following spaces of random variables and stochastic processes:
\begin{itemize}
\item For any $t \in [0,T]$, $L^2_{\mathcal{F}_t}(\Omega;\mathcal{H})$ denotes the Banach space of all $\mathcal{H}$-valued $\mathcal{F}_{t}$-measurable random variables $f$ such that $\mathbb{E} \|f\|_{\mathcal{H}}^2 < \infty$.

\item $L^2_{\mathbb{F}}(0,T;\mathcal{H})$ denotes the Banach space of all $\mathcal{H}$-valued, $\mathbb{F}$-adapted processes $X(\cdot)$ such that
$
\mathbb{E} \|X(\cdot)\|_{L^2(0,T;\mathcal{H})}^2 < \infty.
$

\item $L^\infty_{\mathbb{F}}(0,T;\mathcal{H})$ denotes the Banach space of all essentially bounded $\mathcal{H}$-valued, $\mathbb{F}$-adapted processes.

\item $L^2_{\mathbb{F}}(\Omega;C([0,T];\mathcal{H}))$ denotes the space of all $\mathcal{H}$-valued, $\mathbb{F}$-adapted continuous processes $X(\cdot)$ satisfying
$
\mathbb{E} \|X(\cdot)\|_{C([0,T];\mathcal{H})}^2 < \infty.
$

\item $C_{\mathbb{F}}([0,T];L^2(\Omega;\mathcal{H}))$ denotes the space of $\mathcal{H}$-valued, $\mathbb{F}$-adapted processes whose paths are continuous as maps into $L^2(\Omega;\mathcal{H})$. Analogously, one defines the spaces $C^k_{\mathbb{F}}([0,T];L^2(\Omega;\mathcal{H}))$ for any positive integer $k$ and $C_{\mathbb{F}}([0,T];L^\infty(\Omega;\mathcal{H}))$.
\end{itemize}
All these spaces are equipped with their standard norms. Throughout the paper, for any vector $\mathbf{v}$ we set $|\mathbf{v}| \deq \sqrt{\langle \mathbf{v}, \mathbf{v} \rangle}$, where $\langle \cdot , \cdot \rangle$ denotes the standard Euclidean inner product.

Let $ G \subset \mathbb{R}^n $ ($ n \in \mathbb{N} $) be a bounded domain with a $ C^2 $-smooth boundary $ \Gamma$. Given $ \tau \in (0,T] $, we introduce the following space-time domains:\vspace{-2mm}
$$
Q_\tau \deq (0,\tau) \times G, \qquad
\Sigma_\tau \deq (0,\tau) \times \Gamma.\vspace{-2mm}
$$
For $ i = 1,2,\dots,n $, let $ A_i(x) $ be $ N \times N $ symmetric matrices whose entries belong to $ C^1(\overline{G}) $. Let $\nu(x) = (\nu_1(x),\dots,\nu_n(x))$ denote the unit outward normal vector of $G$ at a boundary point $x \in \Gamma$. By symmetry, there exists an orthogonal matrix $\Pi = \Pi(x)$ such that\vspace{-2mm}
\begin{equation} \label{boundarymatrix}
\Lambda(x) = \Pi^{-1}(x) \Bigl(\sum_{i=1}^n\nu_i(x) A_i(x) \Bigr) \Pi(x)
= \begin{pmatrix}
\Lambda_+(x) & 0 & 0 \\
0 & 0 & 0 \\
0 & 0 & \Lambda_-(x)
\end{pmatrix},\vspace{-2mm}
\end{equation}
where $\Lambda_+(x) \in \mathbb{R}^{n_{+}\times n_{+}}$ and $\Lambda_-(x) \in \mathbb{R}^{n_{-} \times n_{-}}$ are diagonal matrices containing the positive and negative eigenvalues of $\sum_{i=1}^n \nu_i(x) A_i(x)$, respectively. The numbers $n_{+} = n_{+}(x)$ and $n_{-} = n_{-}(x)$ may depend on the boundary point $x \in \Gamma$.

Using $\Pi(x)$ from \eqref{boundarymatrix}, we set\vspace{-2mm}
\begin{equation} \label{zeta}
\zeta(t,x) = \begin{pmatrix}
\zeta_+(t,x) \\
\zeta_0(t,x) \\
\zeta_-(t,x)
\end{pmatrix} \deq \Pi^{-1}(x)\, y(t,x).\vspace{-2mm}
\end{equation}
Consider the following controlled system:\vspace{-2mm}
\begin{equation}\label{fq}
\begin{cases}
dy + \sum_{i=1}^{n} A_i  y_{x_i}\, dt
= (B_1 y + B_3 v)\, dt + (B_2 y + v)\, dW(t) & \text{in } Q_T,\\[4pt]
\zeta_{-} = u & \text{on } \Sigma_T,\\[2pt]
y(0) = y_0 & \text{in } G,
\end{cases}\vspace{-2mm}
\end{equation}
with initial data $y_0 \in L^2(G;\mathbb{R}^N)$ and coefficient matrices $B_1,B_2,B_3 \in L_{\mathbb{F}}^\infty(0,T;L^\infty(G;\mathbb{R}^{N\times N}))$. Here $y(\cdot)$ is the state, and
$(u,v) \in L_{\mathbb{F}}^2\bigl(0,T; L^2(\Gamma; \mathbb{R}^{n_{-}})\bigr) \times
L_{\mathbb{F}}^2\bigl(0,T; L^2(G;\mathbb{R}^N)\bigr)$ are the control inputs.  Here the sample point $ \omega $ will be omitted when no confusion arises. Throughout the paper, the subscript $ x_i $ denotes the partial derivative with respect to $ x_i $.

The functions $\zeta_+(t,x) \in \mathbb{R}^{n_{+}}$ and $\zeta_-(t,x) \in \mathbb{R}^{n_{-}}$ are called the outgoing and  incoming variables of system \eqref{fq} at $x \in \Gamma$, respectively.
The boundary condition of system \eqref{fq} is imposed on the incoming variable $\zeta_-(t,x)$.

Owing to the smoothness of $A_i(x)$ and $\Gamma$, it follows from \cite[pp. 99--101]{K13} that $\Pi(x)$ depends $C^1$-smoothly on $x \in \Gamma$. Furthermore, we assume that $\Gamma $ can be partitioned into finitely many measurable pieces on which $n_{+}(x)$ and $n_{-}(x)$ are constants. Under this assumption, $L_{\mathbb{F}}^2\bigl(0,T; L^2(\Gamma ; \mathbb{R}^{n_{-}})\bigr)$ is a well-defined Hilbert space.

The control system \eqref{fq} is a first‑order stochastic hyperbolic system with a non‑homogeneous boundary condition. Solutions to \eqref{fq} will be understood in the transposition sense. For this purpose, we introduce the corresponding backward stochastic hyperbolic system. For $\tau \in (0,T]$, consider\vspace{-2mm}
\begin{equation} \label{eqAdjoint}
\begin{cases}
\begin{aligned}
&dz\! +\! \sum_{i=1}^n\! A_i z_{x_i}  dt
\!	= \!-\Big[ \Big( B_1^{\top} \!-\! B_2^{\top} B_3^{\top} \!+\! \sum_{i=1}^n\! A_{i,x_i} \Big) z\! +\! B_2^{\top} Z \Big] dt \!
+ \big( Z \!-\! B_3^{\top} z \big) dW(t) && \text{in } Q_\tau, \\[2pt]
&\xi_+ = 0 && \text{on } \Sigma_\tau, \\[2pt]
& z(\tau) = z_\tau && \text{in } G,
\end{aligned}
\end{cases}\vspace{-2mm}
\end{equation}
where $z_\tau \in L_{\mathcal F_\tau}^2(\Omega; L^2(G;\mathbb{R}^{N}))$ and\vspace{-2mm}
$$
\xi(t,x) = \begin{pmatrix}
\xi_+(t,x) \\[1pt]
\xi_0(t,x) \\[1pt]
\xi_-(t,x)
\end{pmatrix} \deq \Pi^{-1}(x) z(t,x),\vspace{-2mm}
$$
with $\xi_+(t,x) \in \mathbb{R}^{n_{+}}$ and $\xi_-(t,x) \in \mathbb{R}^{n_{-}}$. Throughout the paper, the superscript $^\top$ denotes the transpose of a vector or matrix.

By \cite[Theorem 4.10]{LZ21} and the discussion in Remark \ref{RmkA1} of Appendix \ref{appendixA}, we have the following well-posedness result for the backward stochastic system \eqref{eqAdjoint}:
\begin{proposition} \label{wellposednessofbackward}
For any $\tau \in (0,T]$ and any terminal datum $z_\tau \in L_{\mathcal{F}_\tau}^2\bigl(\Omega; L^2(G;\mathbb{R}^{N})\bigr)$, equation \eqref{eqAdjoint} admits a unique mild solution
$(z,Z) \in L_{\mathbb{F}}^2\bigl(\Omega; C([0,\tau]; L^2(G;\mathbb{R}^{N}))\bigr) \times L_{\mathbb{F}}^2\bigl(0,\tau; L^2(G;\mathbb{R}^{N})\bigr)$ satisfying
$$
\|z\|_{L_{\mathbb{F}}^2(\Omega;C([0,\tau];L^2(G;\mathbb{R}^{N})))}
+ \|Z\|_{L_{\mathbb{F}}^2(0,\tau;L^2(G;\mathbb{R}^{N}))}
\leq C \|z_\tau\|_{L_{\mathcal{F}_\tau}^2(\Omega;L^2(G;\mathbb{R}^{N}))}.
$$
\end{proposition}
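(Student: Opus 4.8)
The plan is to recast \eqref{eqAdjoint} as an abstract linear backward stochastic evolution equation on the Hilbert space $H \deq L^2(G;\dbR^N)$ and then invoke \cite[Theorem 4.10]{LZ21}. I define the unbounded operator $\cA z \deq \sum_{i=1}^n A_i z_{x_i}$ with domain
\[
D(\cA) \deq \Big\{ z \in H : \textstyle\sum_{i=1}^n A_i z_{x_i} \in H,\ \xi_+ = 0 \text{ on } \Gamma \Big\},
\]
where $\xi_+$ is the outgoing component of $\Pi^{-1} z$ as in \eqref{boundarymatrix}. Setting $F \deq B_1^{\top} - B_2^{\top} B_3^{\top} + \sum_i A_{i,x_i}$, the system \eqref{eqAdjoint} takes the standard backward form $dz = -[\cA z + F z + B_2^{\top} Z]\,dt + (Z - B_3^{\top} z)\,dW(t)$ with $z(\tau) = z_\tau$, to which the general theory applies once its hypotheses are checked.

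The central step, which is the substance of Remark \ref{RmkA1}, is to show that $\cA$ generates a $C_0$-semigroup on $H$; this is what licenses the mild-solution (transposition) formulation. I would establish this via the Lumer--Phillips theorem by verifying dissipativity, after a bounded shift, of both $\cA$ and its adjoint. The key computation is integration by parts: for $z \in D(\cA)$, using the symmetry of the $A_i$,
\[
\langle \cA z, z\rangle_H = \frac{1}{2}\int_\Gamma \Big\langle \Big(\sum_{i=1}^n \nu_i A_i\Big) z, z\Big\rangle\, d\sigma - \frac{1}{2}\int_G \Big\langle \Big(\sum_{i=1}^n A_{i,x_i}\Big) z, z\Big\rangle\, dx .
\]
Diagonalizing the boundary matrix through \eqref{boundarymatrix} and writing $z = \Pi\xi$, the boundary integrand equals $\langle \Lambda_+ \xi_+, \xi_+\rangle + \langle \Lambda_- \xi_-, \xi_-\rangle$; the boundary condition $\xi_+ = 0$ annihilates the first summand and leaves $\frac{1}{2}\int_\Gamma \langle \Lambda_- \xi_-, \xi_-\rangle\,d\sigma \le 0$, since $\Lambda_-$ is negative definite. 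As the interior term is controlled by $\frac{1}{2}\|\sum_i A_{i,x_i}\|_{L^\infty}\|z\|_H^2$, the operator $\cA - \lambda I$ is dissipative for $\lambda$ large. The same computation shows that the adjoint $\cA^{*} = -\sum_i A_i \partial_{x_i} - \sum_i A_{i,x_i}$, carried on the domain with the complementary boundary condition (vanishing of the incoming component on $\Gamma$), is dissipative after an analogous shift; since $H$ is a Hilbert space, this yields that $\cA$ generates a $C_0$-semigroup on $H$.

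With the semigroup in hand, the remaining hypotheses of \cite[Theorem 4.10]{LZ21} are immediate: the terminal datum lies in $L^2_{\cF_\tau}(\Omega; H)$ by assumption, and the drift and diffusion coefficients are affine in $(z,Z)$ with $\mathbf{F}$-adapted, essentially bounded coefficients $F, B_1, B_2, B_3$, hence globally Lipschitz in $(z,Z)$ uniformly in $(t,\omega)$. The cited theorem then delivers a unique mild solution $(z,Z)$ in the stated space together with the a priori bound, where the constant $C$ depends only on $T$, the $C^1$-norms of the $A_i$, the domain $G$, and the $L^\infty$-bounds of $B_1,B_2,B_3$.

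I expect the main obstacle to be the rigorous treatment of the boundary conditions when identifying $D(\cA)$ and $D(\cA^{*})$: one must make sense of the trace $\xi_+|_\Gamma$ for elements of the graph domain, where $z$ need not be in $H^1$ but $\sum_i A_i z_{x_i} \in H$, and verify that $\xi_+ = 0$ is exactly the adjoint of the forward boundary condition $\zeta_- = u$. This is precisely the point for which the partition of $\Gamma$ into pieces with constant $n_\pm(x)$ and the $C^1$-dependence of $\Pi(x)$ from \cite{K13} are needed, and it is where I would rely on the trace and extension theory for first-order symmetric hyperbolic operators.
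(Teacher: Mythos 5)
Your proposal is correct and follows essentially the same route as the paper: the paper likewise reduces \eqref{eqAdjoint} to an abstract backward stochastic evolution equation and invokes \cite[Theorem 4.10]{LZ21}, with the semigroup generation established in Appendix \ref{appendixA} by exactly your Lumer--Phillips computation (quasi-dissipativity of the first-order operator and its adjoint via integration by parts, using the signs of $\Lambda_{\pm}$ and the complementary boundary conditions $\xi_+=0$ versus $\widetilde{\mathbf y}_-=0$). The only step you leave implicit --- density and closedness of the graph domain, and the meaning of the trace $\xi_+|_{\Gamma}$ when $z\notin H^1$ --- is precisely what the paper supplies through the trace regularity result \cite[Proposition 6.8]{CP11} (Lemma \ref{L2}), which you correctly identify as the needed ingredient.
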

Hereafter,  unless explicitly stated otherwise, $C$ denotes a generic positive constant that depends at most on $T$, the matrices $A_i$, the domain $G$, and the coefficients $B_1, B_2, B_3$; its value may vary from line to line.

Note that Proposition \ref{wellposednessofbackward} does not directly imply $\xi_{-} \in L^2_{\mathbb{F}}(0,T;L^2(\Gamma ;\mathbb{R}^{n_{-}}))$. This regularity is, however, provided by the following hidden regularity result.
\begin{proposition} \label{hidden}
There exists a constant $C > 0$, independent of $\tau$, such that for any $\tau \in (0,T]$ and any $z_\tau \in L_{\mathcal{F}_\tau}^2\bigl(\Omega; L^2(G;\mathbb{R}^{N})\bigr)$, the solution of the backward stochastic system \eqref{eqAdjoint} satisfies\vspace{-2mm}
$$
\| \xi_{-} \|^2_{L_{\mathbb{F}}^2(0,\tau;L^2(\Gamma ;\mathbb{R}^{n_{-}}))}
\le C \| z_{\tau} \|^2_{L_{\mathcal{F}_{\tau}}^2(\Omega;L^2(G;\mathbb{R}^{N}))}.
$$
\end{proposition}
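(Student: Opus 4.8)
The plan is to establish the estimate by the energy (multiplier) method for the symmetric hyperbolic system, exploiting the block structure \eqref{boundarymatrix} together with the homogeneous boundary condition $\xi_+ = 0$ to extract a coercive boundary term in $\xi_-$ with the favorable sign. First I would apply It\^o's formula to $t \mapsto |z(t,x)|^2$, integrate over $G$, and take expectations. Denoting by $F$ the drift of \eqref{eqAdjoint} and recalling that its diffusion term is $Z - B_3^\top z$, the It\^o correction produces a term $\mathbb{E}\int_G |Z-B_3^\top z|^2\,dx$, giving
\begin{equation*}
\frac{d}{dt}\,\mathbb{E}\int_G |z|^2\,dx = -2\,\mathbb{E}\int_G \Big\langle z, \sum_{i=1}^n A_i z_{x_i}\Big\rangle dx + 2\,\mathbb{E}\int_G\langle z, F\rangle\,dx + \mathbb{E}\int_G|Z-B_3^\top z|^2\,dx .
\end{equation*}
Since each $A_i$ is symmetric, $\langle z, A_i z_{x_i}\rangle = \tfrac12\partial_{x_i}\langle z, A_i z\rangle - \tfrac12\langle z, A_{i,x_i}z\rangle$, so the divergence theorem converts the principal term into the boundary integral $\int_\Gamma \langle z, (\sum_i \nu_i A_i) z\rangle\,d\Gamma$ plus an interior remainder controlled by $\int_G|z|^2$ (here the hypothesis $A_i\in C^1(\overline G)$ enters through $A_{i,x_i}$). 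Using $z = \Pi\xi$ and \eqref{boundarymatrix}, the boundary integrand diagonalizes as $\langle \xi_+,\Lambda_+\xi_+\rangle + \langle \xi_-,\Lambda_-\xi_-\rangle$, and the condition $\xi_+=0$ annihilates the outgoing contribution, leaving exactly $\langle \xi_-,\Lambda_-\xi_-\rangle$. Because $\Lambda_-$ collects the negative eigenvalues and is uniformly negative definite, $-\langle\xi_-,\Lambda_-\xi_-\rangle \ge c\,|\xi_-|^2$ for some $c>0$.

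Next I would integrate the resulting identity in time from $0$ to $\tau$ and isolate the boundary term, discarding $-\mathbb{E}\int_G|z(0)|^2\,dx \le 0$, to obtain
\begin{equation*}
c\,\mathbb{E}\int_0^\tau\!\!\int_\Gamma |\xi_-|^2\,d\Gamma\,dt \;\le\; \mathbb{E}\int_G|z(\tau)|^2\,dx + \Big|\,\mathbb{E}\int_0^\tau\!\!\int_G\big(\text{interior terms}\big)\,dx\,dt\,\Big|.
\end{equation*}
The right-hand side is then dominated using Proposition \ref{wellposednessofbackward}: the first term equals $\|z_\tau\|^2_{L^2_{\mathcal F_\tau}(\Omega;L^2(G;\mathbb{R}^N))}$, while the interior terms, after Young's inequality applied to the indefinite cross term $\langle z, B_2^\top Z\rangle$ coming from $F$, are bounded by $C\big(\|z\|_{L^2_{\mathbb F}(\Omega;C([0,\tau];L^2))}^2 + \|Z\|_{L^2_{\mathbb F}(0,\tau;L^2)}^2\big) \le C\|z_\tau\|_{L^2_{\mathcal F_\tau}}^2$. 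The factor $\tau \le T$ arising from the time integration of $\sup_t\mathbb{E}\int_G|z|^2$ keeps $C$ independent of $\tau$, yielding precisely the claimed inequality.

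The main obstacle is rigor for mild solutions: the trace $\xi_-$ is not a priori defined (this is the very content of \emph{hidden regularity}), and the integration by parts requires the spatial derivatives $z_{x_i}$, which a mild solution need not possess. I would resolve this by a density/approximation argument: establish the energy identity and the uniform bound first for regular terminal data $z_\tau$, for which the solution is strong and the boundary traces are classical (via a Galerkin or semigroup-smoothing scheme, and using the $C^1$ dependence of $\Pi(x)$ noted after \eqref{boundarymatrix}), and then extend to general $z_\tau \in L^2_{\mathcal F_\tau}(\Omega;L^2(G;\mathbb R^N))$. The uniform estimate shows that the trace map $z_\tau \mapsto \xi_-$ is bounded on a dense subspace, hence extends continuously, and the limit defines $\xi_-$ in $L^2_{\mathbb F}(0,\tau;L^2(\Gamma;\mathbb R^{n_-}))$ with the same bound. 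A secondary point requiring care is the uniform coercivity $-\Lambda_-(x)\ge cI$ on $\Gamma$, which is where the non-degeneracy of the incoming eigenvalues is used together with the partition of $\Gamma$ into pieces of constant $n_-$.
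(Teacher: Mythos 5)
Your proposal is correct and follows essentially the same route as the paper: It\^o's formula for $|z|^2$ plus integration by parts, diagonalization of the boundary flux via $\Pi$ so that $\xi_+=0$ leaves only the coercive term $-\langle\xi_-,\Lambda_-\xi_-\rangle\ge c|\xi_-|^2$, an approximation argument to justify the computation for mild solutions (the paper uses the resolvent regularization $R^*(\mu)=\mu(\mu I-\mathcal A^*)^{-1}$ of the terminal datum and the noise terms, which is one concrete instance of the smoothing scheme you describe), and finally Proposition \ref{wellposednessofbackward} to absorb the interior terms.
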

Using the well-posedness and hidden regularity results for \eqref{eqAdjoint}, we can now define transposition solutions of \eqref{fq} as follows.
\begin{definition} \label{defDefinitionTrans}
A stochastic process $y \in C_{\mathbb{F}}\bigl([0,T]; L^2(\Omega;L^2(G;\mathbb{R}^{N}))\bigr)$ is called a  {\it transposition solution}  of \eqref{fq} if, for every $\tau \in (0,T]$ and every $z_{\tau} \in L_{\mathcal{F}_\tau}^2\bigl(\Omega; L^2(G;\mathbb{R}^{N})\bigr)$,
\begin{equation} \label{deftrans}
\begin{aligned}
\mathbb{E}\int_{G} \langle y(\tau), z_\tau \rangle\,dx
- \int_{G} \langle y_0, z(0) \rangle\,dx
= \mathbb{E}\int_0^\tau \int_{G} \langle v, Z \rangle\,dx\,dt
- \mathbb{E}\int_0^\tau \int_{\Gamma} \langle \xi_{-}, \Lambda_{-} u \rangle\,d\Gamma\,dt,
\end{aligned}
\end{equation}
where $(z,Z)$ is the solution of \eqref{eqAdjoint}.
\end{definition}

In Section \ref{S3} we shall establish the following well-posedness result for system \eqref{fq}.
\begin{proposition} \label{wellposednessfq}
For any initial datum $y_0 \in L^2(G;\mathbb{R}^{N})$, boundary control $u \in L_{\mathbb{F}}^2(0,T;L^2(\Gamma ;\mathbb{R}^{n_{-}}))$ and internal control $v \in L_{\mathbb{F}}^2(0,T;L^2(G;\mathbb{R}^{N}))$, the system \eqref{fq} admits a unique transposition solution
$y \in C_{\mathbb{F}}\bigl([0,T]; L^2(\Omega;L^2(G;\mathbb{R}^{N}))\bigr)$ satisfying\vspace{-2mm}
$$
\| y \|_{C_{\mathbb{F}}([0,T];L^2(\Omega;L^2(G;\mathbb{R}^{N})))}
\leq C\Big( \| y_0\|_{L^2(G;\mathbb{R}^{N})}
+ \| u\|_{L_{\mathbb{F}}^2(0,T;L^2(\Gamma ;\mathbb{R}^{n_{-}}))}
+ \| v\|_{L_{\mathbb{F}}^2(0,T;L^2(G;\mathbb{R}^{N}))}\Big).
$$
\end{proposition}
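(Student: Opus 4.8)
The plan is to realize the transposition solution through a duality (Riesz) argument, which yields the value $y(\tau)$ pointwise in $\tau$ together with the asserted bound and uniqueness, and then to upgrade to a genuinely time-continuous representative by an approximation argument.

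First I would fix $\tau\in(0,T]$ and regard the right-hand side of \eqref{deftrans} as a functional of the terminal datum $z_\tau$. Writing $(z,Z)$ for the solution of the backward system \eqref{eqAdjoint} associated with $z_\tau$ and $\xi_-$ for the corresponding incoming trace, set
\[
\mathcal{L}_\tau(z_\tau)\deq \int_G\langle y_0,z(0)\rangle\,dx+\mathbb{E}\int_0^\tau\!\!\int_G\langle v,Z\rangle\,dx\,dt-\mathbb{E}\int_0^\tau\!\!\int_\Gamma\langle\xi_-,\Lambda_- u\rangle\,d\Gamma\,dt .
\]
Linearity of $\mathcal{L}_\tau$ follows from linearity of \eqref{eqAdjoint} in $z_\tau$. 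For boundedness I would estimate the three terms separately: since $\mathcal{F}_0$ is trivial, $z(0)$ is deterministic and $\|z(0)\|_{L^2(G)}\le\|z\|_{L_{\mathbb{F}}^2(\Omega;C([0,\tau];L^2(G)))}$, so the first term is controlled by $C\|y_0\|_{L^2(G)}\|z_\tau\|$ via Proposition \ref{wellposednessofbackward}; the second by $\|v\|\,\|Z\|\le C\|v\|\,\|z_\tau\|$, again by Proposition \ref{wellposednessofbackward}; and the third, after bounding $\Lambda_-$ uniformly, by $C\|u\|\,\|\xi_-\|\le C\|u\|\,\|z_\tau\|$ using the hidden regularity of Proposition \ref{hidden}. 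Hence $\|\mathcal{L}_\tau\|\le C(\|y_0\|+\|u\|+\|v\|)$, and the Riesz representation theorem on the Hilbert space $L_{\mathcal{F}_\tau}^2(\Omega;L^2(G;\mathbb{R}^N))$ produces a unique $y(\tau)$ with $\mathbb{E}\int_G\langle y(\tau),z_\tau\rangle\,dx=\mathcal{L}_\tau(z_\tau)$ for all $z_\tau$, so that $\|y(\tau)\|_{L_{\mathcal{F}_\tau}^2(\Omega;L^2(G))}=\|\mathcal{L}_\tau\|\le C(\|y_0\|+\|u\|+\|v\|)$. Uniqueness is then immediate: the difference of two transposition solutions pairs to zero against every $z_\tau$, hence vanishes for each $\tau$.

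The delicate point is continuity in time, i.e. producing a representative in $C_{\mathbb{F}}([0,T];L^2(\Omega;L^2(G)))$; the pointwise construction above does not by itself yield this, and a direct argument is obstructed by the fact that the admissible test data $z_\tau$ live in the $\tau$-dependent spaces $L_{\mathcal{F}_\tau}^2$. I would resolve this by approximation. Choosing regular, $\mathbf{F}$-adapted data $(y_0^k,u^k,v^k)$ converging to $(y_0,u,v)$ in the respective spaces, one solves \eqref{fq} in the strong (mild) sense — lifting the inhomogeneous boundary datum $\zeta_-=u^k$ and invoking the well-posedness of the stochastic symmetric hyperbolic system with homogeneous incoming condition — to obtain $y^k\in C_{\mathbb{F}}([0,T];L^2(\Omega;L^2(G)))$. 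Applying It\^o's formula to $\int_G\langle y^k,z\rangle\,dx$ and integrating by parts in space, the symmetry of the $A_i$ converts the first-order terms into the boundary contribution $\int_\Gamma\langle\Lambda\zeta^k,\xi\rangle\,d\Gamma$, which collapses to $\int_\Gamma\langle\xi_-,\Lambda_- u^k\rangle\,d\Gamma$ after using $\zeta_-^k=u^k$ and $\xi_+=0$; the remaining drift and cross-variation terms cancel precisely by the design of \eqref{eqAdjoint}, leaving exactly the identity \eqref{deftrans}. Thus each $y^k$ is the transposition solution for $(y_0^k,u^k,v^k)$.

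Finally I would pass to the limit. Applying the a priori bound of the first step to the differences gives $\|y^k-y^j\|_{C_{\mathbb{F}}([0,T];L^2(\Omega;L^2(G)))}\le C(\|y_0^k-y_0^j\|+\|u^k-u^j\|+\|v^k-v^j\|)$, so $(y^k)$ is Cauchy in this complete space and converges to some $y$, which is therefore continuous; letting $k\to\infty$ in \eqref{deftrans} (each term being continuous in the data) shows $y$ is the transposition solution for $(y_0,u,v)$ and satisfies the stated estimate. The main obstacle is precisely the continuity assertion: everything else reduces to the two backward estimates already in hand, whereas the time-continuity forces one either to confront the moving measurability constraint directly or, as above, to establish the forward strong well-posedness of \eqref{fq} for regular data — in particular the correct treatment of the inhomogeneous condition on the incoming characteristics $\zeta_-$ — which is where the genuine work lies.
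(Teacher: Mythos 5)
Your proposal is correct and follows essentially the same route as the paper: regularize and lift the boundary control, solve the resulting homogeneous-boundary stochastic system in the mild sense, verify the transposition identity via It\^o's formula and integration by parts, and then use the duality estimate coming from Proposition \ref{hidden} to show the approximants are Cauchy in $C_{\mathbb{F}}([0,T];L^2(\Omega;L^2(G;\mathbb{R}^N)))$. The only cosmetic difference is your preliminary Riesz-representation step defining $y(\tau)$ pointwise, which the paper folds into the final norm estimate by choosing a suitable $z_\tau$.
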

We now recall the notion of exact controllability for system \eqref{fq}.
\begin{definition}
System \eqref{fq} is called  {\it exactly controllable} at time $T$ if, for every initial state $y_0 \in L^2(G;\mathbb{R}^{N})$ and every target state $y_1 \in L^{2}_{\mathcal{F}_{T}}\bigl(\Omega; L^{2}(G;\mathbb{R}^{N})\bigr)$, there exist boundary control
$u \in L^{2}_{\mathbb{F}}\bigl(0,T; L^{2}(\Gamma ; \mathbb{R}^{n_{-}})\bigr)$ and an internal control $v \in L^{2}_{\mathbb{F}}\bigl(0,T; L^{2}(G; \mathbb{R}^{N})\bigr)$
such that the corresponding solution $y(\cdot)$ of \eqref{fq} satisfies $y(T) = y_1$, $\mathbb{P}$-a.s.
\end{definition}

The main difficulty in studying exact controllability problem of System \eqref{fq} lies in obtaining a suitable observability estimate  for the associated backward stochastic adjoint system, which through duality is equivalent to exact controllability. For deterministic hyperbolic systems, such estimates are often derived via multiplier methods or geometric optics. In the stochastic setting, however, these techniques do not directly apply due to the It\^o correction and the non-anticipative nature of solutions. The primary technical novelty of this work is the development of a  global Carleman estimate  tailored for the adjoint system \eqref{eqAdjoint}. Carleman estimates have proven powerful for control problems of stochastic PDEs (see, e.g., \cite{Fu17-1, Fu17-2, LiaoLu24, Lx19, Lu14, Lu23, LuW22, LZ21, Z10, Zhao25} and the references therein).   In particular, \cite{Lu14} employed a Carleman estimate to prove exact controllability for a stochastic transport equation. However, Carleman estimates for systems in more than two variables remain challenging to derive and are still relatively scarce in the literature. Our global Carleman estimate is inspired by the one established for deterministic first‑order symmetric hyperbolic systems in \cite{FTY22}.

To establish the exact controllability of system \eqref{fq}, we impose the following structural assumption on the coefficient matrices $A_i(x)$ ($i = 1,2,\dots,n$).
\begin{condition}
\label{cond1}
There exists a function $\eta(\cdot) \in C^{1}(\overline{G})$ and a constant $c_{0} > 0$ such that\vspace{-2mm}
\begin{equation}\label{con1}
- \Big\langle \Big(\sum_{i=1}^{n} A_{i}(x) \eta_{x_{i}}(x) \Big) \gamma , \gamma \Big \rangle
\ge c_{0} |\gamma|^2,
\qquad
\forall\, x \in \overline{G},\; \gamma \in \mathbb{R}^N .\vspace{-2mm}
\end{equation}
\end{condition}
Condition \ref{cond1} is essential for the derivation of our Carleman estimate. Its geometric interpretation as well as several physical examples that satisfy this condition will be discussed in  Section \ref{S2}. We are now ready to state our main exact controllability result.
\begin{theorem}
\label{thmControllability}
Assume that Condition \ref{cond1} holds. If the terminal time $T$ satisfies\vspace{-2mm}
\begin{align}\label{eqMinimumTime}
T > T_{0} \deq \frac{1}{c_{0}}\Big(\max_{x \in \overline{G}} \eta(x) - \min_{x \in \overline{G}} \eta(x)\Big),\vspace{-2mm}
\end{align}
then system \eqref{fq} is exactly controllable at time $T$.
\end{theorem}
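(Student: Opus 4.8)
The plan is to prove Theorem \ref{thmControllability} by the classical duality between exact controllability and observability. Using the transposition formula \eqref{deftrans} with $\tau = T$ and initial datum $y_0 = 0$, the reachability operator $(u,v)\mapsto y(T)$ has as its adjoint the map $z_\tau \mapsto (\Lambda_-\xi_-|_{\Sigma_T}, Z)$ read off from the backward system \eqref{eqAdjoint}; surjectivity of the former is equivalent to a lower bound on the latter, i.e. to the observability inequality
$$
\mathbb{E}\|z(T)\|^2_{L^2(G;\mathbb{R}^N)}
\le C\Big(\|\xi_-\|^2_{L^2_{\mathbb{F}}(0,T;L^2(\Gamma;\mathbb{R}^{n_-}))}
+ \|Z\|^2_{L^2_{\mathbb{F}}(0,T;L^2(G;\mathbb{R}^N))}\Big)
$$
for every solution $(z,Z)$ of \eqref{eqAdjoint}. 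Proposition \ref{wellposednessfq}, Proposition \ref{wellposednessofbackward}, and the hidden regularity of Proposition \ref{hidden} make every pairing above well defined, so this reduction is rigorous and the entire problem reduces to proving the displayed observability inequality.

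To establish observability I would introduce a weight $\theta = e^{\ell}$ with $\ell(t,x) = \lambda\bigl(\eta(x) - c_0 t\bigr)$, where $\eta$ and $c_0$ come from Condition \ref{cond1} and $\lambda>0$ is a large parameter fixed at the end. Applying It\^o's formula to $\theta^2|z|^2$ along \eqref{eqAdjoint}, integrating over $G$, and integrating by parts in $x_i$ using the symmetry of the $A_i$, the transport term $\sum_i A_i z_{x_i}$ yields the boundary integral $\frac12\int_\Gamma \theta^2\langle z,(\sum_i\nu_i A_i)z\rangle\,d\Gamma$ together with a volume term $-\lambda\int_G \theta^2\langle z,(\sum_i A_i\eta_{x_i})z\rangle\,dx$. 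By \eqref{boundarymatrix}--\eqref{zeta} the boundary quadratic form diagonalizes as $\langle\xi_+,\Lambda_+\xi_+\rangle + \langle\xi_-,\Lambda_-\xi_-\rangle$, and the homogeneous condition $\xi_+ = 0$ in \eqref{eqAdjoint} removes the indefinite outgoing part, leaving exactly the observed quantity $\langle\xi_-,\Lambda_-\xi_-\rangle$. Simultaneously, Condition \ref{cond1} (through \eqref{con1}) forces the volume term to contribute $\lambda c_0\int_G\theta^2|z|^2\,dx$ with a definite sign; for $\lambda$ large this dominates and absorbs all lower-order contributions from $B_1,B_2,B_3$, from $\sum_i A_{i,x_i}$, and from the It\^o correction $\theta^2|Z-B_3^\top z|^2$, whose genuinely stochastic part $\theta^2|Z|^2$ is kept as the second observed quantity.

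The remaining difficulty is that integrating the identity in time produces the two endpoint terms $\mathbb{E}\int_G\theta^2(T)|z(T)|^2\,dx$ and $\mathbb{E}\int_G\theta^2(0)|z(0)|^2\,dx$, the latter involving the uncontrolled, unobserved datum $z(0)$. This is precisely where the sharp threshold \eqref{eqMinimumTime} enters: $T>T_0$ is equivalent to $\max_{\overline{G}}\bigl(\eta-c_0T\bigr) < \min_{\overline{G}}\eta$, so the space-time level sets of $\phi(t,x)=\eta(x)-c_0 t$ separate $\{T\}\times\overline{G}$ from $\{0\}\times\overline{G}$. I would therefore run the weighted energy identity not for $\theta^2|z|^2$ but for $\chi(\phi)\,\theta^2|z|^2$, with $\chi$ a smooth cutoff equal to $1$ near $t=T$ and to $0$ near $t=0$, its transition confined to the interior gap guaranteed by $T>T_0$. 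The resulting identity retains the full term $\mathbb{E}\int_G\theta^2(T)|z(T)|^2\,dx$, discards the $z(0)$ contribution, and generates only commutator terms supported in the transition region, which are lower order and absorbed by the dominant $\lambda c_0$ volume term. Since $\theta^2(T,\cdot)$ is bounded below by a positive constant for fixed $\lambda$, this delivers the observability inequality, and hence, by the duality of the first paragraph, the exact controllability claimed in Theorem \ref{thmControllability}.

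I expect the principal obstacle to be the rigorous stochastic Carleman/energy computation in the multi-dimensional setting. Unlike the deterministic case, It\^o's formula injects the correction $\theta^2|Z-B_3^\top z|^2$, and the weight together with the coefficient in front of $\int_{Q_T}\theta^2|z|^2$ must be tuned so that the cross term $\theta^2\langle z,Z\rangle$ is absorbed while only the observable $\|Z\|_{L^2(Q_T)}$ survives. Two further technical points demand care: the diagonalizing matrix $\Pi(x)$ is only $C^1$ and the integers $n_\pm(x)$ vary along $\Gamma$, so the boundary manipulations must be performed on the finitely many measurable pieces on which $n_\pm$ are constant; and the absorption of the transition-region commutator terms must be made uniform in $\lambda$ before $\lambda$ is finally fixed. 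Once these are in place, the argument closes.
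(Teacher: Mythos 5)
Your reduction to the observability inequality and the bulk of the weighted-identity computation coincide with the paper's argument: duality, a weight $e^{\lambda\varphi}$ built from the function $\eta$ of Condition \ref{cond1}, diagonalization of the boundary form via $\Pi$ together with $\xi_+=0$, a coercive volume term of order $\lambda^2$ coming from \eqref{con1}, and retention of the It\^o correction as the $\|Z\|$ observation. The divergence --- and the gap --- lies in how you dispose of the endpoint term at $t=0$. You take $\varphi(t,x)=\eta(x)-c_0t$, which \emph{decreases} in $t$, and a cutoff $\chi(\varphi)$ equal to $1$ near $t=T$ and $0$ near $t=0$, with transition in $\{c_1\le\varphi\le c_2\}$ where $\max_{\overline G}(\eta-c_0T)<c_1<c_2<\min_{\overline G}\eta$. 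Writing $\Psi=\chi(\varphi)e^{2\lambda\varphi}=:g(\varphi)$, the commutator generated by the cutoff is exactly $g'(\varphi)\bigl[\varphi_t|z|^2+\langle z,(\sum_{i=1}^{n}A_i\eta_{x_i})z\rangle\bigr]$. Since $g$ must grow like $e^{2\lambda\varphi}$ for $\varphi\le c_1$ and vanish for $\varphi\ge c_2$, necessarily $g'<0$ on part of the transition region; there the bracket is only bounded below by $-C|z|^2$, so the commutator contributes a \emph{positive} term of size up to $C(\lambda+\|\chi'\|_{\infty})e^{2\lambda c_2}|z|^2$ on the wrong side of the inequality. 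It cannot be absorbed by the coercive term $\lambda c_0\chi\theta^2|z|^2$, because that term vanishes with $\chi$ precisely where $|\chi'|$ is largest; nor by $\mathbb{E}\int_G\theta^2(T)|z(T)|^2dx$, because with your weight $\theta^2\ge e^{2\lambda c_1}$ on the transition region strictly \emph{exceeds} $\theta^2(T,\cdot)\le e^{2\lambda(\max_{\overline G}\eta-c_0T)}$, so the ratio blows up as $\lambda\to\infty$; nor via a Gronwall energy bound, which would require $c_2<\min_{\overline G}\eta-c_0T$ while the separation forces $c_2>\max_{\overline G}\eta-c_0T$. So the claim that the transition-region commutators are ``lower order and absorbed'' is false as stated, and the proof does not close with this weight.

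The paper avoids the cutoff entirely: it takes $\varphi=\beta t+\eta(x)$ with $0<\beta<c_0$, so that $\theta$ \emph{increases} in time, keeps the $t=0$ term, proves $\|z(0)\|^2_{L^2(G;\mathbb{R}^N)}\le e^{CT}\mathbb{E}\|z_T\|^2_{L^2(G;\mathbb{R}^N)}$ by an It\^o/Gronwall energy estimate, and then absorbs $\int_G\theta^2(0)|z(0)|^2dx\le e^{CT+2\lambda\max_{\overline G}\eta}\mathbb{E}\|z_T\|^2$ into $\tfrac12\mathbb{E}\int_G\theta^2(T)|z_T|^2dx\ge\tfrac12e^{2\lambda(\beta T+\min_{\overline G}\eta)}\mathbb{E}\|z_T\|^2$ for $\lambda$ large; this is possible for some $\beta<c_0$ exactly when $T>T_0$, which is how \eqref{eqMinimumTime} enters. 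If you wish to keep a cutoff, the repair is the same sign flip: with $\varphi=\beta t+\eta$ the slices $\{0\}\times\overline G$ and $\{T\}\times\overline G$ separate on the low and high sides of $\varphi$ respectively, the cutoff is increasing in $\varphi$, hence $g'=(\chi'+2\lambda\chi)e^{2\lambda\varphi}\ge0$ everywhere and the commutator $g'(\beta-c_0)|z|^2$ acquires a favorable sign, while the lower-order terms degenerate proportionally to $\chi$ and are absorbed uniformly. One of these repairs is needed before your argument is complete.
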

\begin{remark}
Consider the scalar case $N = 1$. Let $\mathcal{O} = (A_{1}, \dots, A_{n})^{\top} \in \mathbb{R}^{n}$, $\Pi = 1$, and define the inflow boundary by
$\Gamma^{-} = \{ x \in \Gamma \mid \mathcal{O} \cdot \nu(x) < 0 \}$.
Take $\tilde{a}_1, \tilde{a}_2 \in \overline{G}$ such that $|\tilde{a}_1 - \tilde{a}_2| = \max_{a_1,a_2 \in \overline{G}} |a_1 - a_2|$.
Without loss of generality, we may assume $|\mathcal{O}| = 1$, $0 \in G$, and $0 = \tilde{a}_1 + \tilde{a}_2$.

Then \eqref{fq} reduces to the scalar stochastic transport equation\vspace{-2mm}
$$
\begin{cases}
d y + \mathcal{O} \cdot \nabla y \, dt = (B_{1} y + B_{3} v) \, dt + (B_{2} y + v) \, dW(t), & \text{in } Q_T, \\[2pt]
y = u, & \text{on } (0,T) \times \Gamma^{-}, \\[2pt]
y(0) = y_{0}, & \text{in } G.
\end{cases}\vspace{-2mm}
$$
This system was studied in \cite[Chapter 8]{LZ21}, where exact controllability was proved under the condition $T > \widetilde{T}_{0} \deq 2 \max_{x \in \Gamma} |x|$.

Choosing the weight function $\eta(x) = - \mathcal{O} \cdot x$ in Condition \ref{cond1} yields $c_{0} = 1$. Consequently, Theorem \ref{thmControllability} guarantees exact controllability whenever\vspace{-2mm}
$$
T > T_{0} = \max_{x \in \overline{G}} (\mathcal{O} \cdot x) - \min_{x \in \overline{G}} (\mathcal{O} \cdot x).\vspace{-2mm}
$$
Observe that $T_{0} \le \widetilde{T}_{0}$ in general; hence our result improves the previously known sufficient time. Moreover, $T_{0}$ is indeed the  minimal  control time in this setting. If controllability were possible for some $T < T_{0}$, one could deduce exact controllability for a deterministic transport equation in a time shorter than the characteristic crossing time, which is impossible because certain characteristics would not reach the boundary within the horizon, violating the exact controllability property.
\end{remark}

In the study of exact controllability for scalar stochastic transport equations in \cite{Lu14}, the author presented several results on the  lack  of exact controllability, which demonstrate the necessity of employing both boundary and internal controls. Similar controllability limitations hold for the more general stochastic first‑order hyperbolic system \eqref{fq}.

We use two controls to achieve exact controllability  of the control system  \eqref{fq}. To underscore the necessity of the proposed control structure, we also establish several negative controllability results:  
\begin{theorem}\label{thm1.2}
Assume additionally that $B_1\in L^{\infty}(0,T;L^{\infty}(G;\mathbb{R}^N))$ and $B_3 = 0$. If $u \equiv 0$, then system \eqref{fq} is  not  exactly controllable for any $T > 0$.
\end{theorem}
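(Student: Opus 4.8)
The plan is to exploit the fact that, under the stated hypotheses, the mean $\mathbb{E}[y(T)]$ of any transposition solution is \emph{entirely determined by the initial datum $y_0$} and is insensitive to the internal control $v$; since a target $y_1$ may be prescribed with an arbitrary mean, this alone obstructs exact controllability. The two standing assumptions ($B_3=0$ and $B_1$ deterministic) are precisely what make the adjoint system admit deterministic solutions.

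First I would show that, for deterministic terminal data, the adjoint system \eqref{eqAdjoint} has a purely deterministic solution with vanishing second component. Fix any deterministic $z_T \in L^2(G;\mathbb{R}^N)$. Since $B_3=0$, the diffusion term of \eqref{eqAdjoint} reduces to $Z\,dW(t)$ and the drift to $-[(B_1^\top+\sum_i A_{i,x_i})z + B_2^\top Z]\,dt$. Consider the \emph{deterministic} backward symmetric hyperbolic system
$$
dz + \sum_{i=1}^n A_i z_{x_i}\, dt = -\Big(B_1^\top + \sum_{i=1}^n A_{i,x_i}\Big) z\, dt \ \text{ in } Q_T, \qquad \xi_+ = 0 \ \text{ on } \Sigma_T, \qquad z(T) = z_T .
$$
Because $A_i$ and, by hypothesis, $B_1$ are deterministic, this is a standard linear first-order symmetric hyperbolic system (the time-reversal of a forward one), whose unique mild solution $z$ is deterministic. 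Setting $Z\equiv 0$, the pair $(z,0)$ solves \eqref{eqAdjoint}: the term $B_2^\top Z$ vanishes, so the randomness carried by $B_2$ never enters the equation. By the uniqueness assertion of Proposition \ref{wellposednessofbackward}, $(z,0)$ is \emph{the} solution of \eqref{eqAdjoint}; in particular $Z\equiv 0$ and $z$ is deterministic.

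Next I would insert such a deterministic $z_T$, together with $u\equiv 0$, into the transposition identity \eqref{deftrans} at $\tau=T$. The boundary term drops out because $u\equiv 0$, and the interior control term $\mathbb{E}\int_0^T\!\int_G \langle v, Z\rangle\,dx\,dt$ vanishes because $Z\equiv 0$. Hence
$$
\int_G \langle \mathbb{E}[y(T)], z_T\rangle\, dx = \mathbb{E}\int_G \langle y(T), z_T\rangle\, dx = \int_G \langle y_0, z(0)\rangle\, dx ,
$$
where the first equality uses that $z_T$ is deterministic. Writing $\mathcal{S} z_T \deq z(0)$ for the (deterministic, linear, bounded) solution operator of the backward hyperbolic system above, the right-hand side equals $\int_G \langle \mathcal{S}^* y_0, z_T\rangle\, dx$. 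Since this holds for \emph{every} deterministic $z_T\in L^2(G;\mathbb{R}^N)$, it forces $\mathbb{E}[y(T)] = \mathcal{S}^* y_0$, independently of the choice of $v$.

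Finally, the conclusion is immediate: take $y_0=0$, so that $\mathbb{E}[y(T)] = \mathcal{S}^*0 = 0$ for \emph{every} admissible control $v$, and choose any target $y_1 \in L^2_{\mathcal{F}_T}(\Omega;L^2(G;\mathbb{R}^N))$ with $\mathbb{E}[y_1]\neq 0$ (for instance, a nonzero deterministic function). No control can achieve $y(T)=y_1$, since that would require $\mathbb{E}[y_1]=\mathbb{E}[y(T)]=0$. Thus \eqref{fq} is not exactly controllable for any $T>0$. I expect the only delicate point to be the first step—verifying that the adjoint solution is deterministic with $Z\equiv 0$—which is exactly where both hypotheses are used: were $B_1$ random (or $B_3\neq 0$), the backward equation would generically produce a random $z$ and a nonzero $Z$, and $\mathbb{E}[y(T)]$ could then be steered through the interior control.
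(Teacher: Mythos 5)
Your proof is correct, but it runs on the dual side where the paper runs on the primal side. The paper sets $y_0=0$, writes the solution of the reduced system \eqref{5.1} in mild form via the semigroup generated by $\mathcal A$, takes expectations (which annihilates the stochastic integral), and applies Gronwall's inequality to conclude $\mathbb{E}[y(t)]\equiv 0$; any target with nonzero mean is then unreachable. You instead stay entirely within the transposition framework: for deterministic $z_T$ and $B_3=0$, $B_1$ deterministic, the pair $(z,0)$ with $z$ the (deterministic) solution of the backward symmetric hyperbolic system is a mild solution of \eqref{eqAdjoint}, hence by the uniqueness in Proposition \ref{wellposednessofbackward} it is \emph{the} solution, and plugging it into \eqref{deftrans} with $u\equiv 0$ kills both right-hand terms and yields $\mathbb{E}[y(T)]=\mathcal S^*y_0$ independently of $v$. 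The underlying mechanism is the same — the mean of the state is insensitive to the diffusion control, and both hypotheses ($B_3=0$, $B_1$ deterministic) enter at exactly the analogous points — but your route has two small advantages: it avoids the (implicit) identification of the transposition solution with the mild solution of the homogeneous-boundary problem, and it gives the slightly sharper statement $\mathbb{E}[y(T)]=\mathcal S^*y_0$ for arbitrary $y_0$ rather than only for $y_0=0$. The paper's route is more elementary in that it needs only the forward mild-solution formula and Gronwall, with no appeal to uniqueness for the backward stochastic system. Both arguments are valid.
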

\begin{theorem}\label{thm1.3}
Assume additionally that $B_2 \in C_{\mathbb{F}}([0,T]; L^{\infty}(\Omega; L^{\infty}(G; \mathbb{R}^{N \times N})))$.
Let $G_0$ be an open subset of $G$ with $G \setminus \overline{G_0} \neq \emptyset$. If the control $v$ is supported only in $(0,T) \times G_0$, then system \eqref{fq} is  not  exactly controllable for any $T > 0$.
\end{theorem}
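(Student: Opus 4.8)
The plan is to show that, because the internal control $v$ acts only in $G_0$, the state cannot acquire arbitrary fresh randomness inside the open nonempty set $G_1\deq G\setminus\overline{G_0}$, and to turn this into an explicit target that is unreachable for \emph{every} $T>0$. Concretely, I will identify a quantitative ``amount of new randomness created in $[T-h,T]$'' that every solution of \eqref{fq} must keep of order $O(h)$ inside $G_1$, and then produce a target whose new randomness is of order $h^{\epsilon}$ with $\epsilon<1$, contradicting reachability. Since exact controllability requires reaching every $y_1\in L^2_{\cF_T}(\Omega;L^2(G;\dbR^N))$ (in particular from $y_0=0$), exhibiting one such $y_1$ suffices.

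The key reduction is to test the equation against a fixed deterministic $\beta\in C_c^\infty(G_1;\dbR^N)$ with $\|\beta\|_{L^2(G)}=1$. Because $\supp\beta\subset G_1$ is disjoint from $\supp v\subset(0,T)\times G_0$, the scalar process $M_t\deq\langle y(t),\beta\rangle$ is, by the weak (transposition) formulation of \eqref{fq}, an It\^o process whose control contributions drop out entirely: using the symmetry of the $A_i$ and integration by parts (no boundary term, as $\beta$ is compactly supported in $G_1$), one gets $dM_t=\big[\langle y,\tilde\beta\rangle+\langle B_1 y,\beta\rangle\big]dt+\langle B_2 y,\beta\rangle\,dW(t)$, where $\tilde\beta\deq\sum_{i=1}^n\big(A_i\beta_{x_i}+A_{i,x_i}\beta\big)\in C_c(G_1;\dbR^N)$. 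The crucial point is that the diffusion coefficient of $M_t$ is the \emph{uncontrolled} quantity $\langle B_2 y,\beta\rangle$, since $\langle v,\beta\rangle=0$ and $\langle B_3 v,\beta\rangle=0$; all spatial derivatives have been transferred onto the smooth test function, so no regularity of $y$ beyond $y\in C_{\dbF}([0,T];L^2(\Omega;L^2(G)))$ is needed.

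Define the new-randomness increment $R_h(y)\deq\dbE\big|M_T-\dbE[M_T\mid\cF_{T-h}]\big|^2$. Conditioning the It\^o expansion of $M_T-M_{T-h}$ on $\cF_{T-h}$ kills the stochastic integral's mean, and the It\^o isometry together with a Cauchy--Schwarz bound on the drift integral gives, for \emph{any} solution $y$ of \eqref{fq}, \ $R_h(y)\le 2\int_{T-h}^T\dbE|\langle B_2 y,\beta\rangle|^2\,dt+2h\int_{T-h}^T\dbE|\langle y,\tilde\beta\rangle+\langle B_1 y,\beta\rangle|^2\,dt\le C_y\,h$, where $C_y$ depends only on $\beta$, on $\|B_1\|_\infty,\|B_2\|_\infty$, and on $\sup_{t}\dbE\|y(t)\|_{L^2(G)}^2<\infty$. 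I would then fix $\epsilon\in(0,1)$, set $\chi\deq\int_0^T (T-t)^{-(1-\epsilon)/2}\,dW(t)\in L^2_{\cF_T}(\Omega)$ and take the target $y_1\deq\beta\,\chi\in L^2_{\cF_T}(\Omega;L^2(G))$. Since $\|\beta\|_{L^2}=1$ we have $\langle y_1,\beta\rangle=\chi$, whose martingale integrand is $(T-t)^{-(1-\epsilon)/2}$, so the It\^o isometry yields $R_h(y_1)=\int_{T-h}^T(T-t)^{-(1-\epsilon)}\,dt=\epsilon^{-1}h^{\epsilon}$. If $y_1$ were reachable there would exist an admissible solution with $y(T)=y_1$ and finite $\sup_t\dbE\|y(t)\|^2$ (Proposition \ref{wellposednessfq}), forcing $\epsilon^{-1}h^{\epsilon}=R_h(y_1)\le C_y h$ for all small $h$; letting $h\downarrow0$ and using $\epsilon<1$ gives a contradiction. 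Hence $y_1$ is unreachable, and since $h$-localization near $t=T$ never used the size of $T$, the conclusion holds for every $T>0$.

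I expect the main obstacle to be the rigorous justification that the transposition solution satisfies the weak It\^o equation for $M_t=\langle y(t),\beta\rangle$ quoted above (i.e.\ that $M_t$ is a genuine It\^o process with the stated drift and diffusion, for each smooth $\beta$ compactly supported in $G_1$), together with the conditional-expectation/adaptedness bookkeeping needed to split $R_h(y)$ into its martingale and drift parts. This is precisely where the regularity hypothesis $B_2\in C_{\dbF}([0,T];L^\infty(\Omega;L^\infty(G;\dbR^{N\times N})))$ enters, ensuring a well-defined pointwise-in-time diffusion coefficient and the short-interval estimate; by contrast, the mechanism itself is robust, relying only on the disjointness of $\supp\beta$ and $\supp v$ and on $B_1,B_2\in L^\infty_{\dbF}$.
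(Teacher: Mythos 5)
Your proposal is correct, and it reaches the conclusion by a genuinely different route from the paper. Both arguments hinge on the same structural observation: testing the equation against a fixed $\beta\in C_0^\infty(G\setminus\overline{G_0};\mathbb{R}^N)$ kills every control contribution, so $M_t=\int_G\langle y(t),\beta\rangle\,dx$ is an It\^o process whose diffusion coefficient $\int_G\langle y,B_2^\top\beta\rangle\,dx$ is dictated by the solution itself and is therefore constrained by the a priori bound $\sup_t\mathbb{E}\|y(t)\|_{L^2}^2<\infty$. (The paper justifies this weak It\^o identity exactly as you anticipate, via the approximating sequence $y_m=\tilde y_m+\widetilde U_m$ from the proof of Proposition \ref{wellposednessfq} and a passage to the limit, so the technical step you flag is fillable by the argument already in Section \ref{S3}.) Where you diverge is in how the constraint is turned into a contradiction: the paper targets $y(T)=\chi_1\varrho$ with $\varrho$ the abstract non-representable random variable of Lemma \ref{lemmaImpossibleRepresentation} (\cite[Proposition 6.2]{LZ21}), whose hypothesis requires the diffusion integrand to lie in $C_{\mathbb{F}}([0,T];L^2(\Omega))$ --- this is precisely why the extra assumption $B_2\in C_{\mathbb{F}}([0,T];L^\infty(\Omega;L^\infty))$ appears in the statement. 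You instead exhibit an explicit witness, $y_1=\beta\int_0^T(T-t)^{-(1-\epsilon)/2}\,dW(t)$, and contradict the quantitative bound $\mathbb{E}\bigl|M_T-\mathbb{E}[M_T\mid\mathcal{F}_{T-h}]\bigr|^2\le C_yh$ with the exact value $\epsilon^{-1}h^\epsilon$. Your computations check out ($\chi\in L^2_{\mathcal{F}_T}(\Omega)$ since $1-\epsilon<1$; the conditional-variance split and the It\^o isometry are used correctly, with the drift contributing only $O(h^2)$). Two things your version buys: it is self-contained (no appeal to the cited lemma, of which your $\chi$ is in effect an explicit instance), and it only uses $B_2\in L^\infty_{\mathbb{F}}(0,T;L^\infty(G;\mathbb{R}^{N\times N}))$ --- the short-interval estimate needs boundedness, not time-continuity, of $B_2$ --- so the additional continuity hypothesis of the theorem could be dropped under your argument. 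What the paper's route buys is brevity once the lemma is granted, and a uniform template reused verbatim for Theorem \ref{thm1.4}.
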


Furthermore, control must also appear in the diffusion term. To see this, consider the following modified system:\vspace{-2mm}
\begin{equation}\label{1.12}
\begin{cases}
\begin{aligned}
& dy + \sum_{i=1}^{n} A_i(x) y_{x_i} \, dt
= \big( B_1 y + \tilde{v} \big) dt + B_2 y \, dW(t) && \text{in } Q_T,\\[4pt]
& \zeta_{-} = u && \text{on } \Sigma_T,\\[2pt]
& y(0) = y_0 && \text{in } G,
\end{aligned}
\end{cases}\vspace{-2mm}
\end{equation}
where $B_1, B_2, u$ are as in \eqref{fq} and $\tilde{v} \in L_{\mathbb{F}}^2(0,T; L^2(G; \mathbb{R}^N))$ is an internal control acting only on the drift. For this system we obtain the following negative result.
\begin{theorem}\label{thm1.4}
Assume additionally that $B_2 \in C_{\mathbb{F}}([0,T]; L^{\infty}(\Omega; L^{\infty}(G; \mathbb{R}^{N \times N})))$.
Then system \eqref{1.12} is  not  exactly controllable for any $T > 0$.
\end{theorem}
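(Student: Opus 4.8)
The plan is to argue by duality: exact controllability of \eqref{1.12} would force an observability estimate for the associated backward adjoint system, and I would then exhibit a test sequence violating it. The underlying mechanism is that, because the internal control $\tilde v$ acts only on the drift, the diffusion component of the adjoint is never observed, so terminal data whose $L^2$-mass sits entirely in the martingale (noise) part cannot be detected.

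\textbf{Step 1 (adjoint and duality).} Repeating the derivation that leads from \eqref{fq} to \eqref{eqAdjoint}, but now with no control in the diffusion, the adjoint of \eqref{1.12} is the backward stochastic hyperbolic system for $(z,Z)$ with the same principal and transport part $\sum_i A_i z_{x_i}$, drift $-\big[(B_1^{\top}+\sum_i A_{i,x_i})z+B_2^{\top}Z\big]$, diffusion $Z\,dW$, boundary condition $\xi_+=0$ and terminal datum $z(T)=z_T$. The associated transposition identity pairs the drift control $\tilde v$ with $z$ over $Q_T$ and the boundary control $u$ with $\xi_-$ over $\Sigma_T$; crucially, it pairs \emph{no} control with $Z$. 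Consequently, if \eqref{1.12} were exactly controllable, a standard closed-range/HUM argument would make the observation map $z_T\mapsto(z|_{Q_T},\xi_-|_{\Sigma_T})$ bounded below, i.e. there would exist $C>0$ with $\|z_T\|_{L^2_{\mathcal F_T}(\Omega;L^2(G))}^2\le C\big(\|z\|_{L^2_{\mathbb{F}}(0,T;L^2(G))}^2+\|\xi_-\|_{L^2_{\mathbb{F}}(0,T;L^2(\Gamma))}^2\big)$. I only need this (necessary) direction, which follows at once from the duality identity, and I would disprove it.

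\textbf{Step 2 (test sequence concentrated at the terminal time).} Fix $\psi\in C_c^\infty(G;\mathbb{R}^N)$ with $\|\psi\|_{L^2(G)}=1$ and $\supp\psi$ away from $\Gamma$, and set $z_T^{(k)}=\psi\,\sqrt k\,(W(T)-W(T-1/k))$, so that $\|z_T^{(k)}\|_{L^2_{\mathcal F_T}(\Omega;L^2(G))}=1$. The increment is mean-zero and independent of $\mathcal F_{T-1/k}$, so its entire $L^2$-mass lives in the unobserved diffusion variable $Z$ near $T$. I would then show the observed quantities vanish in two regimes: (a) over the window $(T-1/k,T)$ the time-measure is $O(1/k)$ and $z^{(k)}$ stays bounded by Proposition~\ref{wellposednessofbackward}, giving $\|z^{(k)}\|^2_{L^2((T-1/k,T)\times G)}=O(1/k)$, while finite propagation speed of the symmetric hyperbolic operator together with $\supp\psi$ away from $\Gamma$ forces $\xi_-^{(k)}\equiv 0$ on $(T-1/k,T)\times\Gamma$ for $k$ large; (b) for $t\le T-1/k$ the solution is driven by a terminal datum that is conditionally mean-zero at $T-1/k$, so $\|z^{(k)}(T-1/k)\|_{L^2_{\mathcal F_{T-1/k}}(\Omega;L^2(G))}\to 0$, after which Proposition~\ref{wellposednessofbackward} and the hidden regularity of Proposition~\ref{hidden}, applied on $(0,T-1/k)$ with this terminal datum, yield $\|z^{(k)}\|_{L^2((0,T-1/k)\times G)}+\|\xi_-^{(k)}\|_{L^2((0,T-1/k)\times\Gamma)}\to 0$. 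Combining (a) and (b) drives the right-hand side of the observability inequality to $0$ while the left-hand side equals $1$, the desired contradiction; since $T>0$ is arbitrary, \eqref{1.12} is not exactly controllable for any $T$.

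\textbf{Main obstacle.} The delicate point is (b): controlling the backward propagation of the concentrated terminal randomness, i.e. proving $\|z^{(k)}(T-1/k)\|\to 0$ despite the coupling $B_2^{\top}Z$, which feeds the unobserved (and, for this sequence, large) diffusion variable $Z$ back into the dynamics of $z$. In the decoupled toy model $dz=Z\,dW$ one has $z(T-1/k)=\psi\,\mathbb{E}\big[\sqrt k\,(W(T)-W(T-1/k))\mid\mathcal F_{T-1/k}\big]=0$ exactly; the task is to show that the transport operator $\sum_i A_i z_{x_i}$ and the zeroth-order terms (this is where the boundedness hypothesis on $B_2$ enters) perturb this by only $o(1)$ over a window of length $1/k$. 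I expect this to follow from an It\^o energy estimate for the adjoint system on $[T-1/k,T]$ together with Gr\"onwall's inequality, but that is where the real work lies.
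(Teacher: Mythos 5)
Your strategy is genuinely different from the paper's, and as written it contains a gap on which the argument really depends. For orientation: the paper does not pass through an observability inequality at all. It tests the equation against a fixed $\chi_2\in C_0^{\infty}(G;\mathbb{R}^N)$, so that any reachable terminal state of the form $\chi_2\varrho$ would force a representation $\varrho=\varsigma_0+\int_0^T\varsigma_1\,dt+\int_0^T\varsigma_2\,dW(t)$ with $\varsigma_2(t)=\int_G\langle y,B_2^{\top}\chi_2\rangle\,dx\in C_{\mathbb{F}}([0,T];L^2(\Omega))$ (this is precisely where the time-continuity of $B_2$ is used), and Lemma \ref{lemmaImpossibleRepresentation} supplies a $\varrho$ admitting no such representation --- the same mechanism as in Theorem \ref{thm1.3}. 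Your route --- disproving the necessary observability inequality with terminal data carried by the increment $W(T)-W(T-1/k)$ --- is legitimate in principle: Step 1 is standard duality, and your part (b) can indeed be made rigorous by writing the mild solution as a conditional expectation, using that the increment is independent of $\mathcal F_{T-1/k}$ and mean-zero to get $\mathbb{E}\|z^{(k)}(T-1/k)\|^2=O(1/k)$, and then invoking Propositions \ref{wellposednessofbackward} and \ref{hidden} on $(0,T-1/k)$. If completed, your argument would even be slightly stronger than the paper's, since you never use the continuity of $B_2$ in time.

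The gap is in part (a), in the disposal of the boundary observation over the window $(T-1/k,T)$. Proposition \ref{hidden} applied on that window only gives $\|\xi_-^{(k)}\|^2_{L^2((T-1/k,T)\times\Gamma)}\le C\|z_T^{(k)}\|^2=C$, i.e.\ a bound of order one rather than $o(1)$, so without more the right-hand side of your observability inequality does not tend to zero and the contradiction fails. You kill this term by invoking finite speed of propagation, but for the \emph{backward stochastic} system this is a nontrivial, unproved claim: $Z^{(k)}$ is produced by a martingale representation theorem (a global-in-$\omega$ operation) and is fed back into the drift through $B_2^{\top}Z$, so localizing the support of $z^{(k)}$ requires a dedicated energy estimate on backward space-time cones in which the It\^o correction $|Z|^2$ and the coupling $\langle z,B_2^{\top}Z\rangle$ are absorbed with the correct signs. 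Nothing in the paper provides such a lemma, and it cannot simply be cited. Note also that you flag part (b) as the ``main obstacle,'' whereas it is actually the tractable step; the unacknowledged hard point is this domain-of-dependence property. A possible repair that avoids it entirely: test the equation on $[T-1/k,T]$ against $\psi$ and pair with $\sqrt{k}\,(W(T)-W(T-1/k))$ to get the lower bound $\mathbb{E}\int_{T-1/k}^{T}\!\int_G|Z^{(k)}|^2\,dx\,dt\ge 1-o(1)$, which combined with the energy identity behind Proposition \ref{hidden} (which gives the matching upper bound) forces $\mathbb{E}\int_{T-1/k}^{T}\!\int_\Gamma\langle\xi_-^{(k)},-\Lambda_-\xi_-^{(k)}\rangle\,d\Gamma\,dt\to0$; but this, too, must be written out before the proof is complete.
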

\begin{remark}
We emphasize that, because we are dealing with a system (coupled components), it may be possible under additional structural assumptions to achieve controllability with fewer controls; see e.g., \cite{HO21, ABCO17}. Determining the minimal number of required controls lies beyond the scope of the present work.
\end{remark}

\medskip

The rest of the paper is organized as follows. In Section \ref{S2}, we give a geometric interpretation of Condition \ref{cond1} and list several relevant examples. In Section \ref{S3}, we prove the hidden regularity (Proposition \ref{hidden}) for the solution to \eqref{eqAdjoint} and establish the well-posedness (Proposition \ref{wellposednessfq}) for the system \eqref{fq}. Section \ref{S4} is devoted to proving Theorem \ref{thmControllability}. In Section \ref{S5}, we prove Theorems \ref{thm1.2}--\ref{thm1.4}. Finally, in Appendix \ref{appendixA}, we give the proofs of some technical results.

\section{The geometric interpretation of Condition \ref{cond1}}\label{S2}

In this section, we provide a geometric interpretation of Condition \ref{cond1} and present several examples satisfying this condition.

Put\vspace{-2mm}
$$
\mathscr{L}\deq \partial_{t}+\sum_{i=1}^{n}A_{i}(x)\partial_{x_{i}},\vspace{-2mm}
$$
the principal symbol of operator $\mathscr{L}$ is\vspace{-2mm}
$$
P(t,x, \tau, \varpi) = \tau I_N + \sum_{i=1}^n \varpi_i A_i(x),\vspace{-2mm}
$$
where
$\tau \in \mathbb R$, $\varpi=(\varpi_1,\cdots,\varpi_n) \in \mathbb R^n$ are the variables on frequency space, and $ I_{N}$ is the identity matrix in $ \mathbb{R}^{N \times N} $. The propagation of information is on a sub-manifold of the phase space (cotangent bundle) $\mathcal T^*(\mathbb{R} \times G)$:\vspace{-2mm}
\begin{equation}
\text{Char}(P)  \deq  \Big\{ (t, x, \tau, \varpi) \in \mathcal T^*(\mathbb{R}
\times G) \setminus \{0\} \;\Big|\; \det\Big(\tau I_N + \sum_{i=1}^n \varpi_i A_i(x)\Big) = 0 \Big\}.\vspace{-2mm}
\end{equation}
Recalling that $A_i(x)$ $(i=1,\cdots,n)$ are symmetric, hence we have\vspace{-2mm}
$$
\det\Big(\tau I_N + \sum_{i=1}^n \varpi_i A_i(x)\Big) = \prod_{k=1}^N (\tau + \lambda_k(x, \varpi))\vspace{-2mm}
$$
with $\lambda_k=\lambda_k(x,\varpi) \in \mathbb R$ being the $k$-th eigenvalue of the symmetric matrix $\sum_{i=1}^n \varpi_i A_i(x).$ For each $k$, we define the $k$-th Hamiltonian\vspace{-2mm}
$$
H_k=H_k(t,x,\tau,\varpi) \deq \tau+\lambda_{k}(x,\varpi).\vspace{-2mm}
$$
The corresponding $k$-th bicharacteristics
$(t_k(s),\Bx_k(s),\tau_k(s), \boldsymbol{\varpi}_k(s))\subset \text{Char}(P)$ (where the bold letters are used to distinguish them from the space and frequency variables)
are the integral curves of the $k$-th Hamiltonian vector field:\vspace{-2mm}
$$
\begin{cases}
\begin{aligned}
&\frac{dt_k}{ds}=\frac{\partial H_k}{\partial \tau}=1,\\
&\frac{d\Bx_k}{ds}=\nabla_{\varpi}H_k=\nabla_\varpi \lambda_k(\Bx_{k},\boldsymbol{\varpi}_{k}),\\
&\frac{d\tau_k}{ds}=-\frac{\partial H_k}{\partial t}=0,\\
&\frac{d\boldsymbol{\varpi}_k}{ds}=-\nabla_xH_k=-\nabla_x \lambda_k(\Bx_k,\boldsymbol{\varpi}_{k}).
\end{aligned}
\end{cases}\vspace{-2mm}
$$
The curves $(t_k(s),\Bx_k(s))$
are called rays which represent the trajectory of energy propagation (wave front) in the physical space. The following proposition clarifies the geometrical meaning of Condition \ref{cond1}: the existence of a potential function $\eta(\cdot)$ such that all rays propagate along the direction of strict descent of $\eta$.

\begin{proposition} \label{etadecay}
Under Condition \ref{cond1},  $\eta(x)$ strictly decreases along
$\Bx_k(s)$ for all $k=1,\cdots,N$, with a uniform decay rate at least $c_0$ (where $c_0$ is the constant in Condition \ref{cond1}):  \vspace{-2mm}
\begin{equation} \label{geometric}
\frac{d}{ds} \eta(\Bx_k(s)) \leq -c_0, \quad \forall k\in\{1,\cdots,N\}.\vspace{-2mm}
\end{equation}
\end{proposition}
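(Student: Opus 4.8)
The plan is to differentiate $\eta$ along the $k$-th ray and collapse the whole statement onto the pointwise algebraic inequality that is Condition~\ref{cond1}. Since the ray $\Bx_k(s)$ satisfies the Hamiltonian equation $\frac{d\Bx_k}{ds} = \nabla_\varpi \lambda_k(\Bx_k,\boldsymbol{\varpi}_k)$, the chain rule gives
\begin{equation*}
\frac{d}{ds}\eta(\Bx_k(s)) = \Big\langle \nabla\eta(\Bx_k),\, \nabla_\varpi \lambda_k(\Bx_k,\boldsymbol{\varpi}_k)\Big\rangle = \sum_{j=1}^n \eta_{x_j}(\Bx_k)\,\frac{\partial \lambda_k}{\partial \varpi_j}(\Bx_k,\boldsymbol{\varpi}_k).
\end{equation*}
Thus the entire argument reduces to identifying the frequency gradient $\nabla_\varpi \lambda_k$ of the $k$-th eigenvalue.

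The key step is a Hellmann--Feynman type computation. Writing $M(x,\varpi) \deq \sum_{i=1}^n \varpi_i A_i(x)$ and letting $e_k = e_k(x,\varpi)$ be a unit eigenvector with $M e_k = \lambda_k e_k$ and $|e_k| = 1$, I would differentiate this eigen-relation in $\varpi_j$ and pair with $e_k$ to obtain
\begin{equation*}
\big\langle A_j e_k, e_k\big\rangle + \big\langle M\, \partial_{\varpi_j} e_k,\, e_k \big\rangle = \frac{\partial\lambda_k}{\partial\varpi_j} + \lambda_k \big\langle \partial_{\varpi_j} e_k,\, e_k\big\rangle.
\end{equation*}
Because $M$ is symmetric, $\langle M\,\partial_{\varpi_j} e_k, e_k\rangle = \langle \partial_{\varpi_j} e_k, M e_k\rangle = \lambda_k\langle \partial_{\varpi_j} e_k, e_k\rangle$, so the two inner-product terms cancel and I am left with $\frac{\partial\lambda_k}{\partial\varpi_j} = \langle A_j e_k, e_k\rangle$, i.e. $\nabla_\varpi\lambda_k = \big(\langle A_1 e_k,e_k\rangle,\dots,\langle A_n e_k,e_k\rangle\big)$.

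Substituting this back into the first display and using that $\eta_{x_j}$ is scalar yields
\begin{equation*}
\frac{d}{ds}\eta(\Bx_k(s)) = \sum_{j=1}^n \eta_{x_j}(\Bx_k)\,\big\langle A_j(\Bx_k) e_k,\, e_k\big\rangle = \Big\langle \Big(\sum_{j=1}^n A_j(\Bx_k)\eta_{x_j}(\Bx_k)\Big) e_k,\, e_k\Big\rangle.
\end{equation*}
Applying Condition~\ref{cond1} with $\gamma = e_k$ and $|e_k| = 1$ gives exactly $\frac{d}{ds}\eta(\Bx_k(s)) \le -c_0$, which is the claimed estimate \eqref{geometric}.

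The step I expect to require the most care is the differentiability of $\lambda_k$ in $\varpi$: the Hellmann--Feynman formula is legitimate only where $\lambda_k$ is a \emph{simple} eigenvalue, and at frequencies where eigenvalues collide the individual branch $\lambda_k$ need not be $C^1$ (and a single smooth eigenvector $e_k$ need not exist). This is not a genuine obstruction, however, because the $k$-th Hamiltonian vector field — hence the very definition of the ray $\Bx_k(s)$ — already presupposes that $\nabla_\varpi\lambda_k$ and $\nabla_x\lambda_k$ exist, so the flow is generated precisely on the open dense set where $\lambda_k$ is simple. On that set all of the above manipulations are valid, and the decay inequality holds along the trajectory wherever it is defined; by symmetry of the $A_i(x)$ this is the natural regime for the bicharacteristic analysis.
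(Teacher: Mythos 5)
Your proposal is correct and follows essentially the same route as the paper: both differentiate $\eta$ along the ray via the Hamiltonian equation, derive $\partial\lambda_k/\partial\varpi_j=\langle A_j r_k,r_k\rangle$ by the Hellmann--Feynman computation (differentiating the eigen-relation, pairing with the unit eigenvector, and using symmetry to cancel the $\partial_{\varpi_j}r_k$ terms), and then apply Condition~\ref{cond1} with $\gamma$ equal to the unit eigenvector. Your closing remark on the simplicity of $\lambda_k$ is a caveat the paper leaves implicit, but it does not change the argument.
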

\begin{proof}
Noting that $\sum_{i=1}^n\varpi_iA_i(x)$ is symmetric, then for any $k\in\{1,\cdots,N\}$, there exists an  eigenvector $r_k=r_k(x,\varpi)$
with $|r_k|=1$ so that\vspace{-2mm}
$$
\Big(\sum_{i=1}^n \varpi_iA_i(x)\Big)r_k=\lambda_k(x,\varpi)r_k.\vspace{-2mm}
$$
Taking derivative with respect to $\varpi_j$ on both sides of the above identity, and then taking the inner product with $r_k$, we obtain that\vspace{-2mm}
$$
\begin{aligned}
\langle r_k, A_j(x)r_k \rangle +\Big\langle r_k,\Big(\sum_{i=1}^n \varpi_iA_i(x)\Big)\frac{\partial r_k}{\partial \varpi_j} \Big\rangle =\Big\langle r_k, \frac{\partial \lambda_k}{\partial \varpi_j}r_k \Big\rangle+\lambda_k(x,\varpi) \Big\langle r_k,\frac{\partial r_k}{\partial \varpi_j}\Big\rangle.
\end{aligned}\vspace{-2mm}
$$
By the symmetry and  the fact that $|r_k|=1$, we have\vspace{-2mm}
$$
\langle r_k, A_j(x)r_k \rangle=\frac{\partial \lambda_k}{\partial \varpi_j}.\vspace{-2mm}
$$
With the preparations above, we compute the evolution of $\eta$ along the $k$-th rays:\vspace{-2mm}
\begin{equation*}
\frac{d}{ds} \eta(\Bx_k(s)) = \Big\langle \nabla \eta ,\frac{d \Bx_k}{ds} \Big\rangle =\langle \nabla \eta, \nabla_\varpi \lambda_k \rangle
= \sum_{i=1}^n \eta_{x_i}  \langle r_k, A_i(x)r_k \rangle
= \Big\langle r_k, \Big( \sum_{i=1}^n A_i(x) \eta_{x_i} \Big) r_k \Big\rangle.\vspace{-2mm}
\end{equation*}
This, together with \eqref{con1} and $|r_k| =1$, implies \eqref{geometric} immediately.
\end{proof}

\medskip

Proposition \ref{etadecay} implies that all rays are forced to escape through the boundary $\Gamma $ within a finite time, since the decrease rate of $\eta(x)$ along the rays is bounded from below by $c_0$. We can estimate the maximum time required for the slowest rays to traverse the domain:\vspace{-2mm}
$$T_0 = \frac{1}{c_0}\left(\max_{x \in \overline{G}} \eta(x) - \min_{x \in \overline{G}} \eta(x)\right).\vspace{-2mm}
$$
When $T > T_0$, all rays have sufficient time to propagate from the interior to the boundary, allowing them to be controlled by the boundary inputs $u$.

\begin{remark}
Condition \ref{cond1} is somewhat restrictive. In particular, several classical models may fail to satisfy it when rewritten as first-order hyperbolic systems, such as the wave equation and other wave-like equations (e.g., Maxwell or Klein--Gordon). In these settings, one typically needs to design an appropriate multiplier case-by-case; see \cite{LZ21,Z00}.
\end{remark}

At the end of this section, we illustrate the applicability of Condition \ref{cond1} with several representative models.

\begin{example}[Stochastic free traffic flow]
In traffic flow modeling, the Aw-Rascle-Zhang system constitutes a first-order hyperbolic model defined on a one‑dimensional road, with macroscopic state variables given by the traffic density $\rho$ and the velocity $v$. Linearizing around a constant equilibrium $(\rho_0,v_0)$ (see \cite[Section 1.10]{BC16}) yields the deterministic system\vspace{-2mm}
$$
y_t+A_1y_x = B_1y ,\qquad y=(\rho ,v)^\top,\vspace{-2mm}
$$
where\vspace{-2mm}
$$
A_1=\begin{pmatrix}
	v_0 & \rho_0\\[2pt]
	0 & v_0-\rho_0P'(\rho_0)
\end{pmatrix},\qquad 
B_1=\sigma\begin{pmatrix}
	0 & 0\\[2pt]
	V'(\rho_0) & -1
\end{pmatrix}.\vspace{-2mm}
$$
Here $P(\rho)$ (respectively $V(\rho)$) is a strictly increasing (resp. decreasing) function representing the traffic pressure (resp. average speed), and $\sigma$ is proportional to the drivers' reaction time. The eigenvalues of $A_1$ are $v_0$ and $v_0-\rho_0P'(\rho_0)$. Assuming $v_0>\rho_0P'(\rho_0)>0$ corresponds to a free‑flow regime, i.e. the road is relatively empty; consequently both eigenvalues are positive.

In reality, driver behaviour is uncertain, causing $\sigma$ to fluctuate randomly, while additional internal stochastic perturbations may also affect the dynamics \cite[Chapter 5]{YK22}. Therefore we consider the stochastic version\vspace{-2mm}
$$
dy+A_1y_x\,dt = B_1(t,x)y\,dt + B_2(t,x)y\,dW(t).\vspace{-2mm}
$$
Although $A_1$ itself is not symmetric, we introduce the symmetric positive‑definite matrix\vspace{-2mm}
$$
A_0=\begin{pmatrix}
	P'(\rho_0) & 1\\[2pt]
	1 & 2/P'(\rho_0)
\end{pmatrix},\vspace{-2mm}
$$
for which $A_0A_1$ becomes symmetric. Setting $\tilde y = A_0^{1/2}y$ gives\vspace{-2mm}
$$
d\tilde y+\tilde A_1\tilde y_x\,dt = \tilde B_1(t,x)\tilde y\,dt+\tilde B_2(t,x)\tilde y\,dW(t),\vspace{-2mm}
$$
where $\tilde A_1 = A_0^{1/2}A_1A_0^{-1/2}$ is symmetric. Because the eigenvalues of $\tilde A_1$ are positive, Condition  \ref{cond1} is satisfied with the choice $\eta(x)=-x$.
\end{example}

\begin{example}[Linear stochastic age‑structured SIR epidemiological equations]
In mathematical epidemiology, populations subject to an infectious disease are commonly stratified into three compartments:
\begin{itemize}
	\item \textbf{Susceptible} ($S$): individuals who are not infected but may contract the disease;
	\item \textbf{Infected} ($I$): individuals currently carrying the pathogen;
	\item \textbf{Recovered} ($R$): individuals who have recovered and acquired permanent immunity.
\end{itemize}
Let $S(t,x),\,I(t,x),\,R(t,x)$ denote the age‑density of susceptible, infected, and recovered individuals at time $t$ and age $x$. Under standard modelling assumptions (see, e.g., \cite[Section 1.12]{BC16}), the spread of the disease in a closed population can be described by a deterministic linear hyperbolic system with coefficient matrix $A_1(x)=I_3$.

Deterministic models, however, presume fixed parameters. In practice, several sources of randomness cannot be ignored (cf. \cite{Allen2008}):
\begin{enumerate}
	\item Contact rates vary seasonally and with behavioural or policy changes;
	\item Detection rates and compliance with isolation measures are uncertain;
	\item Viral mutations introduce randomness in transmissibility;
	\item Environmental factors (temperature, humidity) affect transmission stochastically.
\end{enumerate}
To incorporate such inherent uncertainty, stochastic age‑structured SIR equations include noise terms driven by Brownian motion:\vspace{-2mm}
\begin{equation}\label{SSIReq}
	\begin{cases}
		\displaystyle
		dS + S_x dt = -\Bigl(\beta(x,t)\frac{SI}{N}+\mu_S(x)S\Bigr)dt + \sigma_S(t)\,dW(t),\\[6pt]
		\displaystyle
		dI + I_x dt = \Bigl(\beta(x,t)\frac{SI}{N}-\bigl(\gamma(x)+\mu_I(x)\bigr)I\Bigr)dt + \sigma_I(t)\,dW(t),\\[6pt]
		\displaystyle
		dR + R_x dt = \bigl(\gamma(x)I-\mu_R(x)R\bigr)dt + \sigma_R(t)\,dW(t).
	\end{cases}\vspace{-2mm}
\end{equation}
Here $\partial/\partial x$ represents ageing; each $\sigma_j(t)dW(t)$ is a Brownian‑driven stochastic perturbation; $\beta(x,t)$ denotes the (possibly time‑varying) age‑specific transmission rate; $\gamma(x)$ is the age‑dependent recovery rate; and $\mu_\cd(x)$ are age‑dependent mortality rates.

For system \eqref{SSIReq}, the choice $\eta(x)=-x$ satisfies Condition \ref{cond1}. 
\end{example}

\begin{example}[Stochastic shallow‑water equations for torrential flow] \label{shallowwater}
The shallow‑water equations are widely used in hydraulic engineering for the regulation of navigable rivers and irrigation networks \cite{HS19}, and in geophysics for the simulation of atmospheric and oceanic flows \cite{HT14}.

Consider the 2‑D linearized shallow‑water equations \cite{YY24} around a constant equilibrium $(H_0,U_0,V_0)$. The corresponding coefficient matrices are\vspace{-2mm}
\begin{equation}\label{2.3}
A_1=
\begin{pmatrix}
U_0 & \sqrt{g H_0} & 0 \\[2pt]
\sqrt{g H_0} & U_0 & 0 \\[2pt]
0 & 0 & U_0
\end{pmatrix},
\qquad
A_2 =
\begin{pmatrix}
V_0 & 0 & \sqrt{g H_0}\\[2pt]
0 & V_0 & 0 \\[2pt]
\sqrt{g H_0} & 0 & V_0
\end{pmatrix},\vspace{-2mm}
\end{equation}
where $g$ is the gravitational constant, and the constants $H_0$, $U_0$, $V_0$ represent the water height and the two horizontal velocity components of the steady flow, respectively.

Assume that
$$
U_0^2+V_0^2 > g H_0,
$$
which means the flow velocity exceeds the gravitational wave speed. This regime is called torrential (or supercritical) flow. In this case, waves cannot propagate upstream against the current; all disturbances and energy are swept strictly downstream.

If a random perturbation, for instance due to stochastic wind stress (see \cite[Chapter 1]{Chapron25}), is present, we obtain the following stochastic shallow‑water equation:
$$
dy + \bigl(A_1 y_{x_{1}} + A_2 y_{x_{2}}\bigr)dt
= B_1(t,x) y \, dt + B_2(t,x) y \, dW(t),
$$
where $y = (h,u,v)^{\top}$ is the symmetrized state variable, and the matrices $A_1,A_2$ are given by \eqref{2.3}.

For constants $\alpha_1,\alpha_2$ satisfying $\alpha_1^2+\alpha_2^2 = 1$, the eigenvalues of the symmetric matrix\vspace{-2mm}
$$
\alpha_1A_1+\alpha_2A_2=
\begin{pmatrix}
\alpha_1U_0+\alpha_2V_0 & \alpha_1\sqrt{g H_0} & \alpha_2\sqrt{g H_0} \\[2pt]
\alpha_1\sqrt{g H_0} & \alpha_1U_0+\alpha_2V_0 & 0 \\[2pt]
\alpha_2\sqrt{g H_0} & 0 & \alpha_1U_0+\alpha_2V_0
\end{pmatrix}\vspace{-2mm}
$$
are\vspace{-2mm}
$$
\begin{aligned}
\lambda_1  = \alpha_1U_0 + \alpha_2V_0,\q
\lambda_2  = (\alpha_1U_0 + \alpha_2V_0) - \sqrt{g H_0},\q
\lambda_3  = (\alpha_1U_0 + \alpha_2V_0) + \sqrt{g H_0}.
\end{aligned}\vspace{-2mm}
$$
Choosing\vspace{-2mm}
$$
(\alpha_1,\alpha_2)=\bigg(\frac{U_0}{\sqrt{U_0^2+V_0^2}},\;
\frac{V_0}{\sqrt{U_0^2+V_0^2}}\bigg)\vspace{-2mm}
$$
makes the matrix $\alpha_1A_1+\alpha_2A_2$ positive definite. Consequently, we may take the weight function\vspace{-2mm}
$$
\eta(x) = -\,\bigl(\alpha_1 x_1 + \alpha_2 x_2\bigr).\vspace{-2mm}
$$
\end{example}

\begin{example}[Stochastic supersonic gas equations]
The evolution of density, velocities and entropy of a compressible, inviscid and non-heat-conducting gas is governed by the Euler equations, see
\cite[(13.2.26), p.~395]{BGS06}.
For $n \ (n=2,3)$ dimensional ideal gas, we linearize the system around a constant equilibrium $(\rho_0,u_0,s_0)$, where the constants $\rho_0$ and $s_0$ are density and entropy, while $u_0=(u_{01},u_{02},\cdots,u_{0n})$ is the $n$-dimensional constant vector standing for the velocity. After symmetrization (see \cite[p.~395]{BGS06}), we denote the symmetric coefficient matrices by $A_i \ (i=1,\cdots,n)$, then\vspace{-2mm}
\begin{equation}\label{2.4}
A_{i}=
\begin{pmatrix}
\frac{u_{0i}}{\rho_0 {\bf c}_0^2} & e_{i}^\top & 0 \\
e_{i} & \rho_0u_{0i} I_n & 0 \\
0 & 0 & u_{0i}
\end{pmatrix},
\quad i=1,\cdots,n,\vspace{-2mm}
\end{equation}
where $e_{i}$ is the $n$-dimensional unit vector with the $i$-th component being 1, and $\Bc_0$ is the sound speed. The uncertain friction parameters naturally render the system stochastic (see \cite{GH23} and references therein):\vspace{-2mm}
$$
dy+\sum_{i=1}^{n}A_{i}y_{x_{i}}dt=B_{1}(t,x)y\,dt+B_{2}(t,x)y\,dW(t),\vspace{-2mm}
$$
where the coefficients $A_i \ (i=1,\cdots,n)$ are given in \eqref{2.4}.

Consider a constant $n$-dimensional unit vector $\alpha=(\alpha_1,\cdots,\alpha_n)^\top$, then the eigenvalues of the $(n+2)\times (n+2)$-matrix\vspace{-2mm}
$$
\sum_{i=1}^n \alpha_iA_i=\begin{pmatrix}
\frac{u_0 \cdot \alpha}{\rho_0 {\bf c}_0^2} & \alpha^\top & 0 \\
\alpha & \rho_0(u_0 \cdot \alpha) I_n & 0 \\
0 & 0 & u_0 \cdot \alpha
\end{pmatrix}\vspace{-2mm}
$$
are\vspace{-2mm}
$$
\lambda_1,\lambda_{n+2}=
\frac{1}{2}\Big(\rho_0(u_0\cdot \alpha)+\frac{u_0\cdot \alpha}{\rho_0 {\bf c}_0^2}\pm \sqrt{\Big(\rho_0(u_0\cdot \alpha)-\frac{u_0\cdot \alpha}{\rho_0 {\bf c}_0^2}\Big)^2+4} \ \Big),\vspace{-2mm}
$$
and\vspace{-2mm}
$$
\lambda_2 = u_0\cdot \alpha,
\qquad
\lambda_3 = \cdots = \lambda_{n+1} = \rho_0(u_0\cdot \alpha).\vspace{-2mm}
$$
Assume the gas speed exceeds the sound speed (supersonic), i.e.,
$|u_0|>{\bf c}_0$,
then we can choose $\alpha=u_0/|u_0|$ and $\eta(x)=-\sum_{i=1}^n \alpha_ix_i$.
\end{example}

\begin{remark}
With minor modifications of the method employed in \cite{Chow15,HT14} and noting \cite{A15}, the well‑posedness result established in Appendix \ref{appendixA} for the systems listed above can be extended to polytopal domains.
Notice that on each point (1D), edge (2D) or face (3D) of such a domain the outward unit normal vector $\nu$ is constant. As a result, the matrix $\sum_{i=1}^{n}\nu_{i}A_{i}$ is constant on each edge/face, and therefore the numbers of positive and negative eigenvalues, denoted respectively by $n_{+}$ and $n_{-}$, are piecewise constant along the boundary $\Gamma$.
\end{remark}

\section{The well-posedness of the control system \eqref{fq}} \label{S3}

In this section, we establish the well-posedness of the control system \eqref{fq}.

Consider the unbounded operator\vspace{-2mm}
$$
\mathcal A \deq -\sum_{i=1}^n A_i(x)\,\partial_{x_i}\vspace{-2mm}
$$
with domain\vspace{-2mm}
$$
D(\mathcal A)=\Big\{\, \mathbf y \in L^2(G;\mathbb R^N) \;\Big|\;
\mathcal{A} \mathbf y \in L^2(G;\mathbb R^N),\; \widetilde{\mathbf y}_{-}=0 \,\Big\},\vspace{-2mm}
$$
where $\widetilde{\mathbf y}_{-}$ is defined through the decomposition\vspace{-2mm}
$$
\widetilde{\mathbf y}= \begin{pmatrix}
\widetilde{\mathbf y}_+,
\widetilde{\mathbf y}_0,
\widetilde{\mathbf y}_-
\end{pmatrix}^\top
= \Pi^{-1}\mathbf y,\vspace{-2mm}
$$
and $\Pi$ is the orthogonal matrix introduced in \eqref{boundarymatrix}.
Here $\widetilde{\mathbf y}_{-}$ and $\widetilde{\mathbf y}_{+}$ are called, respectively, the  incoming  and  outgoing  variables of the operator $\mathcal A$ on the boundary $\Gamma$.

\begin{lemma} \label{C0semigroup}
The operator $\mathcal A$ generates a $C_{0}$-semigroup on $L^{2}(G;\mathbb{R}^{N})$.
\end{lemma}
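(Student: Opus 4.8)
The plan is to invoke the Lumer--Phillips theorem, for which it suffices to verify that $\mathcal{A}$ is densely defined and closed and that, after a spectral shift, both $\mathcal{A}$ and its adjoint $\mathcal{A}^{*}$ are dissipative on $L^{2}(G;\mathbb{R}^{N})$. Density is immediate, since $C_{c}^{\infty}(G;\mathbb{R}^{N}) \subset D(\mathcal{A})$. The heart of the argument is a Green identity: for $\mathbf{y} \in D(\mathcal{A})$, using the symmetry of each $A_{i}$ and integrating by parts,
\begin{equation*}
\langle \mathcal{A}\mathbf{y}, \mathbf{y}\rangle_{L^{2}}
= -\frac{1}{2}\int_{\Gamma} \Big\langle \Big(\sum_{i=1}^{n} \nu_{i} A_{i}\Big)\mathbf{y}, \mathbf{y}\Big\rangle\, d\Gamma
+ \frac{1}{2}\int_{G} \sum_{i=1}^{n} \langle A_{i,x_{i}}\mathbf{y}, \mathbf{y}\rangle\, dx .
\end{equation*}
By \eqref{boundarymatrix} the boundary integrand equals $\langle \Lambda_{+}\widetilde{\mathbf{y}}_{+}, \widetilde{\mathbf{y}}_{+}\rangle + \langle \Lambda_{-}\widetilde{\mathbf{y}}_{-}, \widetilde{\mathbf{y}}_{-}\rangle$; the defining boundary condition $\widetilde{\mathbf{y}}_{-}=0$ together with the positive definiteness of $\Lambda_{+}$ makes this integrand nonnegative, so the boundary contribution $-\tfrac{1}{2}\int_{\Gamma}(\cdots)$ is nonpositive. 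Hence $\langle \mathcal{A}\mathbf{y}, \mathbf{y}\rangle \le C_{0}\|\mathbf{y}\|^{2}$ with $C_{0} \deq \tfrac{1}{2}\sum_{i}\|A_{i,x_{i}}\|_{L^{\infty}}$, i.e.\ $\mathcal{A} - C_{0}I$ is dissipative.

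Next I would identify the adjoint. A parallel integration by parts gives $\mathcal{A}^{*}\mathbf{z} = \sum_{i} A_{i}\partial_{x_{i}}\mathbf{z} + \big(\sum_{i} A_{i,x_{i}}\big)\mathbf{z}$, and the requirement that the boundary pairing $\int_{\Gamma}\langle \Lambda\widetilde{\mathbf{y}}, \widetilde{\mathbf{z}}\rangle\, d\Gamma$ vanish for every $\mathbf{y}\in D(\mathcal{A})$ forces, via the invertibility of $\Lambda_{+}$, the complementary boundary condition $\widetilde{\mathbf{z}}_{+}=0$. The same identity then yields
\begin{equation*}
\langle \mathcal{A}^{*}\mathbf{z}, \mathbf{z}\rangle
= \frac{1}{2}\int_{G}\sum_{i=1}^{n}\langle A_{i,x_{i}}\mathbf{z}, \mathbf{z}\rangle\, dx
+ \frac{1}{2}\int_{\Gamma} \langle \Lambda_{-}\widetilde{\mathbf{z}}_{-}, \widetilde{\mathbf{z}}_{-}\rangle\, d\Gamma ,
\end{equation*}
whose boundary term is nonpositive because $\Lambda_{-}$ is negative definite; thus $\mathcal{A}^{*} - C_{0}I$ is dissipative as well. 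Since a densely defined, closed operator on a Hilbert space whose operator and adjoint are both dissipative generates a contraction semigroup, the shifted operator $\mathcal{A} - C_{0}I$ does, and therefore $\mathcal{A}$ generates a $C_{0}$-semigroup with growth bound at most $C_{0}$.

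The main obstacle is the rigorous treatment of boundary traces. For $\mathbf{y}$ merely in the graph space $\{\mathbf{y}\in L^{2} : \sum_{i} A_{i}\partial_{x_{i}}\mathbf{y}\in L^{2}\}$ the pointwise restriction to $\Gamma$ is not defined in the usual Sobolev sense, so I would first show that the normal pairing $\big(\sum_{i}\nu_{i}A_{i}\big)\mathbf{y}\big|_{\Gamma}$ is well defined in $H^{-1/2}(\Gamma)$ and that the Green identity above extends from $C^{1}(\overline{G})$ to the whole graph space by density. This is precisely where the $C^{1}$-regularity of the $A_{i}$, of $\Gamma$, and of $\Pi(x)$ (recorded after \eqref{zeta}) is used, and it simultaneously delivers the closedness of $\mathcal{A}$. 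A secondary point is the measurability and piecewise constancy of $n_{+}(x)$ and $n_{-}(x)$ along $\Gamma$, which guarantees that the decomposition $\widetilde{\mathbf{y}} = (\widetilde{\mathbf{y}}_{+}, \widetilde{\mathbf{y}}_{0}, \widetilde{\mathbf{y}}_{-})^{\top}$ and the condition $\widetilde{\mathbf{y}}_{-}=0$ are meaningful; this is exactly the standing assumption imposed after \eqref{zeta}.
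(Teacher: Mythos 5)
Your proposal is correct and follows essentially the same route as the paper: Green's identity plus the sign of $\Lambda_{\pm}$ under the complementary boundary conditions $\widetilde{\mathbf y}_{-}=0$ and $\widetilde{\mathbf z}_{+}=0$ gives quasi-dissipativity of $\mathcal A$ and $\mathcal A^{*}$, and the Hilbert-space corollary of Lumer--Phillips concludes. The only divergence is in the technical point you flag at the end: the paper does not work with $H^{-1/2}(\Gamma)$ traces but invokes an $L^{2}(\Gamma)$ trace estimate for $\bigl(\sum_{i}\nu_{i}A_{i}\bigr)\mathbf y$ in terms of the graph norm (Lemma \ref{L2}, quoted from Chazarain--Piriou), which is what makes both the closedness argument and the quadratic boundary form on the graph space go through --- with only $H^{-1/2}$ control the pairing $\int_{\Gamma}\langle A_{\nu}\mathbf y,\mathbf y\rangle\,d\Gamma$ would not obviously extend by density.
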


Lemma \ref{C0semigroup} is essentially presented in \cite[Chapter 9]{BGS06} and \cite[Chapter 7]{CP11}, but the boundary conditions are treated in a different manner from ours. For completeness, we provide a self-contained proof in Appendix \ref{appendixA}.

We first prove Proposition \ref{hidden}.
\begin{proof}[Proof of Proposition \ref{hidden}]
Without loss of generality, we take $\tau = T$. The backward stochastic system \eqref{eqAdjoint} can be written as\vspace{-2mm}
$$
\begin{cases}
\ds dz + \mathcal A^* z \, dt = f \, dt + \big( Z - B_3^\top z \big) dW(t)  &  \text{in } Q_T,\\[4pt]
\ds z(T) = z_T  &  \text{in } G,
\end{cases}\vspace{-2mm}
$$
where $ f = -\big( B_1^\top z - B_2^\top B_3^\top z + B_2^\top Z \big) $ and $\mathcal A^*$ is defined in Lemma \ref{lemmaAdjoint}.

Let $\mu \in \rho(\mathcal A^*) \cap \mathbb R$, where $\rho(\mathcal A^*)$ denotes the resolvent set of $\mathcal A^*$. Set $R^*(\mu) \deq \mu (\mu I - \mathcal A^*)^{-1}$ and consider the regularized backward stochastic system\vspace{-2mm}
\begin{equation} \label{yoshidaapproximation}
\begin{cases}
\ds dz_\mu + \mathcal A^* z_\mu \, dt = R^*(\mu) f \, dt + R^*(\mu) \big( Z_\mu - B_3^\top z_\mu \big) dW(t) & \text{in } Q_T,\\[4pt]
\ds z_\mu(T) = R^*(\mu) z_T & \text{in } G.
\end{cases}\vspace{-2mm}
\end{equation}
Since $R^*(\mu) z_T \in L^2_{\mathcal{F}_T}\bigl(\Omega; D(\mathcal A^*)\bigr)$, by \cite[Theorem 4.12]{LZ21} the system \eqref{yoshidaapproximation} admits a unique solution\vspace{-2mm}
$$
(z_\mu, Z_\mu) \in L_{\mathbb{F}}^2\bigl(\Omega; C([0,T]; D(\mathcal A^*))\bigr) \times L_{\mathbb{F}}^2\bigl(0,T; L^2(G;\mathbb{R}^N)\bigr).\vspace{-2mm}
$$
Let   $\xi_\mu=((\xi_{\mu})_{+},(\xi_{\mu})_{0},(\xi_{\mu})_{-})^{\top}\deq \Pi^{-1} z_\mu$.
Applying It\^o's formula and integration by parts gives\vspace{-2mm}
\begin{align}\label{1.23-eq1}
&\mathbb{E} \| R^*(\mu) z_T \|_{L^2(G;\mathbb{R}^N)}^2 - \| z_\mu(0) \|_{L^2(G;\mathbb{R}^N)}^2 \notag\\
&= -\mathbb{E} \int_0^T \int_{\Gamma} \big\langle (\xi_\mu)_-,\; \Lambda_- (\xi_\mu)_- \big\rangle \, d\Gamma \, dt \\
&\quad + \mathbb{E} \int_0^T \int_G \Big[ -\Big\langle z_\mu,\; \big( \sum_{i=1}^n A_{i,x_i} \big) z_\mu \Big\rangle
+ 2 \langle z_\mu, R^*(\mu) f \rangle
+ \big| R^*(\mu) (Z_\mu - B_3^\top z_\mu) \big|^2 \Big] dx \, dt .\notag\vspace{-2mm}
\end{align}
Passing to the limit as $\mu \to \infty$ and using \cite[Theorem 4.12]{LZ21} to ge the limit of all terms in \eqref{1.23-eq1}, we obtain\vspace{-2mm}
$$
\begin{aligned}
&-\mathbb{E} \int_0^T \int_{\Gamma} \langle \xi_-,\; \Lambda_- \xi_- \rangle \, d\Gamma \, dt \\
&\le \mathbb{E} \| z_T \|_{L^2(G;\mathbb{R}^N)}^2
-\mathbb{E} \int_0^T \int_G \Big[ -\Big\langle z,\; \big( \sum_{i=1}^n A_{i,x_i} \big) z \Big\rangle
+ 2 \langle z, f \rangle + \big| Z - B_3^\top z \big|^2 \Big] dx \, dt \\
&\le C \, \mathbb{E} \| z_T \|_{L^2(G;\mathbb{R}^N)}^2,
\end{aligned}\vspace{-2mm}
$$
where the last inequality follows from Proposition \ref{wellposednessofbackward}. This completes the proof.
\end{proof}

Now we are in a position to prove the well-posedness of the control system \eqref{fq}.

\begin{proof}[Proof of Proposition \ref{wellposednessfq}]
We divide the proof into three steps.

\textbf{Step 1. Extension of the boundary control.}

Let $u \in L_{\mathbb F}^2(0,T;L^2(\Gamma ; \mathbb{R}^{n_{-}}))$ be given. We embed this $\mathbb{R}^{n_{-}}$-valued function into $\mathbb{R}^N$ by following the same block structure as $\zeta$ in \eqref{zeta} and setting the extra components to zero. Multiplying the extended vector by the orthogonal matrix $\Pi(x)$ from \eqref{boundarymatrix} yields\vspace{-2mm}
$$
U(t,x)\deq \Pi(x)\begin{pmatrix}0 \\ 0 \\ u(t,x)\end{pmatrix}
\in L_{\mathbb F}^2(0,T;L^2(\Gamma ; \mathbb{R}^N)).\vspace{-2mm}
$$
By a standard approximation argument (temporal truncation and mollification; see e.g., \cite[Section 4.10]{CP11}), there exists a sequence $\{U_m\}_{m=1}^\infty\subset C_{\mathbb F}^1([0,T];L^2(\Omega;H^{1/2}(\Gamma ;\mathbb R^N)))$ with $U_m(0,x)=0$ for all $m$ such that\vspace{-2mm}
$$
\lim_{m\to\infty} U_m = U \quad \text{in } L_{\mathbb F}^2(0,T;L^2(\Gamma ;\mathbb R^N)).\vspace{-2mm}
$$
Let $((\Pi^{-1}U_m)_{+},(\Pi^{-1}U_m)_{0},(\Pi^{-1}U_m)_{-})^{\top}\deq\Pi^{-1}U_m$ be split as in \eqref{zeta}, and define $u_m\deq(\Pi^{-1}U_m)_{-}$.  Then\vspace{-2mm}
\begin{equation} \label{convergenceofu}
\lim_{m\to\infty} u_m = u \quad \text{in } L_{\mathbb F}^2(0,T;L^2(\Gamma ;\mathbb R^{n_{-}})).\vspace{-2mm}
\end{equation}
For each $m$, a standard lifting argument provides $\widetilde{U}_m \in C_{\mathbb F}^1([0,T];L^2(\Omega;H^1(G;\mathbb{R}^N)))$ satisfying $\widetilde{U}_m|_{\Gamma}=U_m$ and $\widetilde{U}_m(0,x)=0$.

\textbf{Step 2. Reduction to a problem with homogeneous boundary data.}

Consider the auxiliary stochastic hyperbolic system with zero boundary condition:\vspace{-2mm}
\begin{equation} \label{zeroboundarydata}
\begin{cases}
\begin{aligned}
& d\tilde y_m+\sum_{i=1}^n A_i(x) \tilde y_{m,x_i}\,dt
= \big(B_1 \tilde y_m + B_3 v + F_m\big)dt  +\big[B_2(\tilde y_m+\widetilde{U}_m)+ v\big]dW(t)
&& \text{in } Q_T,\\[4pt]
&(\tilde \zeta_m)_{-}=0 && \text{on } \Sigma_T,\\[2pt]
&\tilde y_m(0)=y_0 && \text{in } G,
\end{aligned}
\end{cases}\vspace{-2mm}
\end{equation}
where $F_m = B_1\widetilde{U}_m - \widetilde{U}_{m,t} - \sum_{i=1}^n A_i \widetilde{U}_{m,x_i}$ and
$\tilde{\zeta}_m=((\tilde{\zeta}_m)_{+},(\tilde{\zeta}_m)_0,(\tilde{\zeta}_m)_{-})^{\top}\deq\Pi^{-1}\tilde{y}_{m}$.
By Lemma \ref{C0semigroup} and \cite[Theorem 3.14]{LZ21}, system \eqref{zeroboundarydata} admits a unique mild solution $\tilde y_m \in C_{\mathbb F}([0,T];L^2(\Omega;L^2(G;\mathbb{R}^N)))$.

Set $y_m \deq \tilde y_m + \widetilde{U}_m$. On the boundary,\vspace{-2mm}
$$
(\zeta_m)_{-} = \big[\Pi^{-1}(\tilde y_m+\widetilde{U}_m)\big]_{-}
= (\tilde \zeta_m)_{-} + u_m = u_m.\vspace{-2mm}
$$
Now fix $\tau \in (0,T]$ and $z_\tau \in L_{\mathcal F_\tau}^2(\Omega;L^2(G;\mathbb{R}^N))$. Applying It\^o's formula and integration by parts to the pair $(y_m,z)$ (where $(z,Z)$ solves the adjoint system \eqref{eqAdjoint}) gives\vspace{-2mm}
\begin{equation} \label{m1m2}
\mathbb{E}\int_G \langle y_m(\tau),z_\tau \rangle dx
- \int_G \langle y_0,z(0) \rangle dx
= \mathbb{E}\int_0^\tau \int_G \langle v, Z \rangle dx\,dt
- \mathbb{E}\int_0^\tau \int_{\Gamma} \langle \xi_{-}, \Lambda_{-} u_m \rangle d\Gamma\,dt.\vspace{-2mm}
\end{equation}

\textbf{Step 3. Energy estimates and passage to the limit.}

For any $m_1,m_2\in\mathbb N$, \eqref{m1m2} implies\vspace{-2mm}
$$
\mathbb{E}\int_G \langle y_{m_1}(\tau)-y_{m_2}(\tau),\,z_\tau \rangle dx
= -\mathbb{E}\int_0^\tau \int_{\Gamma} \langle \xi_{-},\, \Lambda_{-}(u_{m_1}-u_{m_2}) \rangle d\Gamma\,dt.\vspace{-2mm}
$$
Choose $z_\tau$ with $\|z_\tau\|_{L_{\mathcal F_\tau}^2(\Omega;L^2(G;\mathbb R^N))}=1$ such that\vspace{-2mm}
$$
\mathbb{E}\int_G \langle y_{m_1}(\tau)-y_{m_2}(\tau),z_\tau \rangle dx
\ge \frac12 \| y_{m_1}(\tau)-y_{m_2}(\tau)\|_{L_{\mathcal F_\tau}^2(\Omega;L^2(G;\mathbb{R}^N))}.\vspace{-2mm}
$$
Then, using Proposition \ref{hidden}, we have that\vspace{-2mm}
$$
\begin{aligned}
&\| y_{m_1}(\tau)-y_{m_2}(\tau)\|_{L_{\mathcal F_\tau}^2(\Omega;L^2(G;\mathbb R^N))} \\
& \le 2\Big|\mathbb{E}\int_0^\tau\int_{\Gamma} \langle \xi_{-},\,\Lambda_{-}(u_{m_1}-u_{m_2}) \rangle d\Gamma\,dt\Big| \\
& \le 2\| u_{m_1}-u_{m_2}\|_{L_{\mathbb{F}}^2(0,T;L^2(\Gamma ;\mathbb R^{n_{-}}))}
\,\|\Lambda_{-}\xi_{-}\|_{L_{\mathbb{F}}^2(0,T;L^2(\Gamma ;\mathbb R^{n_{-}}))} \\
& \le 2C\| u_{m_1}-u_{m_2}\|_{L_{\mathbb{F}}^2(0,T;L^2(\Gamma ;\mathbb R^{n_{-}}))}
\,\|z_\tau\|_{L_{\mathcal{F}_\tau}^2(\Omega;L^2(G;\mathbb{R}^N))} \\
& = 2C\| u_{m_1}-u_{m_2}\|_{L_{\mathbb{F}}^2(0,T;L^2(\Gamma ;\mathbb{R}^{n_{-}}))},
\end{aligned}\vspace{-2mm}
$$
where the constant $C$ is independent of $\tau$. Consequently,\vspace{-2mm}
$$
\|y_{m_1}-y_{m_2}\|_{C_{\mathbb{F}}([0,T];L^2(\Omega;L^2(G;\mathbb{R}^N)))}
\le C\| u_{m_1}-u_{m_2}\|_{L_{\mathbb{F}}^2(0,T;L^2(\Gamma ;\mathbb{R}^{n_{-}}))}.\vspace{-2mm}
$$
Hence $\{y_m\}_{m=1}^\infty$ is a Cauchy sequence in $C_{\mathbb{F}}([0,T];L^2(\Omega;L^2(G;\mathbb R^N)))$. Denote its limit by $y$. Letting $m\to\infty$ in \eqref{m1m2} shows that $y$ satisfies the transposition identity \eqref{deftrans}; therefore $y$ is a transposition solution of \eqref{fq}.

Finally, choose $z_\tau$ with $\|z_\tau\|_{L_{\mathcal F_\tau}^2(\Omega;L^2(G;\mathbb R^N))}=1$ such that\vspace{-2mm}
$$
\mathbb{E}\int_G\langle y(\tau),z_\tau\rangle dx
\ge \frac12 \| y(\tau)\|_{L_{\mathcal F_\tau}^2(\Omega;L^2(G;\mathbb R^N))}.\vspace{-2mm}
$$
Combining \eqref{deftrans} with Proposition \ref{hidden} yields\vspace{-2mm}
\begin{align*}
&\| y(\tau)\|_{L_{\mathcal F_\tau}^2(\Omega;L^2(G;\mathbb R^N))} \\
&\le 2\Big( \Big|\int_G \langle y_0,z(0) \rangle dx\Big|
+\Big|\mathbb{E}\int_0^\tau \int_G \langle v, Z \rangle dx\,dt\Big|
+\Big|\mathbb{E}\int_0^\tau \int_{\Gamma} \langle \xi_{-}, \Lambda_{-} u \rangle d\Gamma\,dt\Big| \Big) \\
&\le C\Big( \| y_0\|_{L^2(G;\mathbb R^N)}
+\| v\|_{L^2_{\mathbb F}(0,T;L^2(G;\mathbb R^N))}
+\| u\|_{L_{\mathbb F}^2(0,T;L^2(\Gamma ;\mathbb R^{n_{-}}))}\Big),\vspace{-2mm}
\end{align*}
with a constant $C>0$ independent of $\tau$. This completes the proof.
\end{proof}

\section{Proof of the observability estimate and the exact controllability} \label{S4}

In this section, we prove our main exact controllability result \cref{thmControllability}. By the classical duality argument, we only need to prove the following observability estimate for the system \eqref{eqAdjoint}.

\begin{proposition}
\label{thmObservability}
Assuming \cref{cond1} holds and $ T > T_{0} $, then
there exists a constant $C>0$ such that for any $ z_{T} \in L^{2}_{\mathcal{F}_{T}}(\Omega; L^{2}(G ; \mathbb{R}^{N})) $, it holds that\vspace{-2mm}
\begin{align}
\label{eqObservability}
\|z_{T}\|_{L^{2}_{\mathcal{F}_{T}}(\Omega; L^{2}(G ; \mathbb{R}^{N}))}^{2}
\leq
C\big(
\|\xi_{-}\|_{L^{2}_{\mathbb{F}}(0,T; L^{2}(\Gamma  ; \mathbb{R}^{n_{-}}))}^{2}
+ \|Z\|^{2}_{L^{2}_{\mathbb{F}}(0,T; L^{2}(G ; \mathbb{R}^{N}))}
\big),
\end{align}\vspace{-2mm}
where $ (z,Z) $ is the solution to \eqref{eqAdjoint}.
\end{proposition}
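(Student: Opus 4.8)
The duality argument opening \cref{S4} has already reduced exact controllability to \eqref{eqObservability}, so the entire task is the observability estimate for the backward system \eqref{eqAdjoint}. The plan is to obtain it from a single global weighted energy identity, closed off by the well-posedness bound of \cref{wellposednessofbackward}. I would introduce the Carleman-type weight $\theta = e^{\lambda\phi}$ with $\phi(t,x) = \eta(x) + \beta t$, where $\eta$ is the potential supplied by \cref{cond1}, $\lambda>0$ is a large parameter fixed last, and $\beta\in(0,c_0)$ is a slope chosen in the final step. Since the solution of \eqref{eqAdjoint} carries an It\^o differential, the natural functional is $\mathcal E(t)\deq\mathbb E\int_G\theta^2|z(t)|^2\,dx$, and the identity is produced by applying It\^o's formula to $\theta^2|z|^2$; the quadratic-variation term $\mathbb E\int_G\theta^2|Z-B_3^{\top}z|^2$ is exactly what will manufacture the $\|Z\|^2$ observation on the right-hand side.

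Expanding $d(\theta^2|z|^2)$ and taking expectation, the drift contains $2\lambda\beta\,\theta^2|z|^2$ from $\phi_t$ and the transport contribution $-2\theta^2\langle z,\sum_i A_i z_{x_i}\rangle$. Integrating the latter by parts and using the symmetry of the $A_i$, the volume part produces $2\lambda\theta^2\langle z,(\sum_i A_i\eta_{x_i})z\rangle$, which by \eqref{con1} is $\le -2\lambda c_0\theta^2|z|^2$; together with the $\phi_t$-term this gives the coercive bulk $2\lambda(\beta-c_0)\theta^2|z|^2$, strictly negative once $\beta<c_0$. The boundary part is $-\int_\Gamma\theta^2\langle z,(\sum_i\nu_i A_i)z\rangle$, which diagonalizes through $\Pi$ into $-\int_\Gamma\theta^2\langle\xi,\Lambda\xi\rangle$; the terminal condition $\xi_+=0$ in \eqref{eqAdjoint} annihilates the $\Lambda_+$ block and leaves precisely $-\int_\Gamma\theta^2\langle\xi_-,\Lambda_-\xi_-\rangle$, controlled by $C\int_\Gamma\theta^2|\xi_-|^2$ because $\Lambda_-$ is negative definite. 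Everything else (terms with $A_{i,x_i}$, $B_1,B_2,B_3$, the cross term $\langle z,B_2^{\top}Z\rangle$, and $|B_3^{\top}z|^2$ from the It\^o part) is lower order, bounded by $C\theta^2(|z|^2+|Z|^2)$ with $C$ independent of $\lambda$. Integrating in $t$ over $[0,T]$ and taking $\lambda$ large enough to absorb the lower-order $\theta^2|z|^2$ into the bulk, I reach
\[
\lambda(c_0-\beta)\,\mathbb E\!\int_0^T\!\!\int_G\theta^2|z|^2 + \mathcal E(T)\ \le\ \mathcal E(0) + C\Big(\mathbb E\!\int_0^T\!\!\int_\Gamma\theta^2|\xi_-|^2 + \mathbb E\!\int_0^T\!\!\int_G\theta^2|Z|^2\Big).
\]

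The only dangerous term is the initial slice $\mathcal E(0)=\mathbb E\int_G e^{2\lambda\eta(x)}|z(0)|^2$, and handling it is where the sharp time condition enters. By \cref{wellposednessofbackward}, $\mathbb E\|z(0)\|_{L^2(G)}^2\le C\|z_T\|^2$, whence $\mathcal E(0)\le C e^{2\lambda\max_{\overline G}\eta}\|z_T\|^2$, while directly $\mathcal E(T)\ge e^{2\lambda(\min_{\overline G}\eta+\beta T)}\|z_T\|^2$. Dropping the nonnegative bulk term, inserting these two bounds, and using that $\theta$ is bounded on $\overline{Q_T}$ for fixed $\lambda$ to replace the weighted observations by the norms of \eqref{eqObservability}, I obtain
\[
\big(e^{2\lambda(\min_{\overline G}\eta+\beta T)} - C e^{2\lambda\max_{\overline G}\eta}\big)\|z_T\|^2\ \le\ C\Big(\|\xi_-\|_{L^2_{\mathbb F}(0,T;L^2(\Gamma;\mathbb R^{n_-}))}^2 + \|Z\|_{L^2_{\mathbb F}(0,T;L^2(G;\mathbb R^N))}^2\Big).
\]
The bracket is positive (indeed comparable to $e^{2\lambda(\min\eta+\beta T)}$ for large $\lambda$) exactly when $\min_{\overline G}\eta+\beta T>\max_{\overline G}\eta$, i.e. $\beta T>\max\eta-\min\eta$. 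Since we also require $\beta<c_0$, such a $\beta$ exists if and only if $T>\tfrac{1}{c_0}(\max\eta-\min\eta)=T_0$, which is precisely \eqref{eqMinimumTime}. Fixing such a $\beta$ and then $\lambda$ large yields \eqref{eqObservability}.

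I expect the main obstacle to be twofold. First, the weighted energy identity must be justified rigorously in the stochastic setting: It\^o's formula should be applied on the Yosida-type regularization already used in \cref{hidden} and then passed to the limit, so that the boundary integral of $\xi_-$ is legitimate — this is exactly where \cref{hidden} is indispensable — and each It\^o cross term must be checked to be genuinely lower order and absorbable. Second, and conceptually deeper, is the exponential balancing in the final step: the whole scheme closes only because the terminal weight $e^{2\lambda(\min\eta+\beta T)}$ strictly dominates the initial weight $e^{2\lambda\max\eta}$ uniformly in $z_T$, and it is this competition — rather than the interior Carleman gain itself — that forces the sharp threshold $T_0$ and makes \cref{cond1}, through the uniform descent rate $c_0$ of \cref{etadecay}, genuinely necessary.
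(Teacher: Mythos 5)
Your proposal is correct and follows essentially the same route as the paper: the same Carleman weight $\theta=e^{\lambda(\beta t+\eta(x))}$ with $\beta\in(0,c_0)$, the same use of Condition \ref{cond1} for the coercive bulk term, the same elimination of the boundary integral via $\xi_+=0$, and the same exponential competition between the initial slice (bounded through the a priori energy estimate for \eqref{eqAdjoint}) and the terminal slice, which is exactly what forces $T>T_0$. The only differences are cosmetic: you apply It\^o's formula directly to $\theta^2|z|^2$ instead of invoking the separate weighted identity \eqref{eqWeightedIdentity} (equivalent up to an overall factor of $\lambda$), and you cite Proposition \ref{wellposednessofbackward} for $\|z(0)\|$ where the paper re-derives the same bound by Gronwall's inequality.
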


In order to prove \cref{thmObservability}, we first establish the following fundamental weighted identity.

\begin{lemma}
Let $ h $ be an $ H^{1}(G;\mathbb{R}^{N}) $-valued It\^{o} process.
Set $ \varphi \in C^{1}(\mathbb{R} \times \mathbb{R}^{n}) $, $ \lambda > 0 $, $ \theta = e^{\lambda \varphi} $, and $ w = \theta h $.
Then, for any $ t \in [0,T] $ and a.e. $ (x, \omega) \in G \times \Omega $, it holds that\vspace{-2mm}
\begin{equation}
\begin{aligned}
\label{eqWeightedIdentity}
&2 \lambda \theta \Big \langle w,  \Big(d h + \sum_{i=1}^{n} A_{i} h_{x_{i}} d t\Big) \Big \rangle\\
& =
\lambda \Big[\sum_{i=1}^{n} \partial_{x_{i}} \langle w,  A_{i} w \rangle d t
+ d \langle w,  w \rangle
- \langle d w,  d w \rangle
- \sum_{i=1}^{n} \langle w ,  A_{i,x_{i}} w \rangle d t \Big]\\ 
&\q
- 2 \lambda^{2} \Big\langle w,  \Big( \sum_{i=1}^{n} A_{i} \varphi_{x_{i}} + \varphi_{t} \Big) w \Big\rangle d t.
\end{aligned}\vspace{-2mm}
\end{equation}
\end{lemma}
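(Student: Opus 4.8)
The plan is to prove \eqref{eqWeightedIdentity} by a direct It\^o computation, splitting the left-hand side into its stochastic part $2\lambda\theta\langle w, dh\rangle$ and its transport part $2\lambda\theta\langle w, \sum_{i}A_i h_{x_i}\rangle\,dt$, and matching each against the corresponding terms on the right. The whole argument rests on two elementary consequences of the substitution $w=\theta h$ with $\theta=e^{\lambda\varphi}$. Differentiating in space gives $w_{x_i}=\lambda\varphi_{x_i}w+\theta h_{x_i}$, hence $\theta h_{x_i}=w_{x_i}-\lambda\varphi_{x_i}w$. And since $\theta$ is deterministic and $C^1$ in $t$, It\^o's formula for the product $\theta h$ produces no correction from $\theta$, giving $dw=\lambda\varphi_t w\,dt+\theta\,dh$, i.e.\ $\theta\,dh=dw-\lambda\varphi_t w\,dt$.

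For the stochastic part I would start from the It\^o product rule $d\langle w,w\rangle=2\langle w,dw\rangle+\langle dw,dw\rangle$ and substitute $\langle w,dw\rangle=\theta\langle w,dh\rangle+\lambda\varphi_t\langle w,w\rangle\,dt$. Solving for $2\lambda\theta\langle w,dh\rangle$ then yields exactly $\lambda\big(d\langle w,w\rangle-\langle dw,dw\rangle\big)-2\lambda^2\varphi_t\langle w,w\rangle\,dt$, which accounts for the $d\langle w,w\rangle$ and $\langle dw,dw\rangle$ terms together with the $\varphi_t$ contribution to the last term of \eqref{eqWeightedIdentity}.

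For the transport part I would use $\theta\sum_{i}A_i h_{x_i}=\sum_{i}A_i w_{x_i}-\lambda\sum_{i}A_i\varphi_{x_i}w$. Pairing with $2\lambda w$, the second group immediately produces the $-2\lambda^2\langle w,(\sum_{i}A_i\varphi_{x_i})w\rangle\,dt$ contribution, completing the last term of \eqref{eqWeightedIdentity}. For the first group I would invoke the symmetry of $A_i$: expanding $\partial_{x_i}\langle w,A_i w\rangle=2\langle w,A_i w_{x_i}\rangle+\langle w,A_{i,x_i}w\rangle$, where symmetry collapses the two distinct first-order terms into a single one, gives $2\langle w,A_i w_{x_i}\rangle=\partial_{x_i}\langle w,A_i w\rangle-\langle w,A_{i,x_i}w\rangle$. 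This supplies precisely the divergence term $\lambda\sum_{i}\partial_{x_i}\langle w,A_i w\rangle\,dt$ and the term $-\lambda\sum_{i}\langle w,A_{i,x_i}w\rangle\,dt$. Adding the stochastic and transport contributions reproduces \eqref{eqWeightedIdentity}.

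Since every step is a bookkeeping manipulation, there is no serious obstacle; the only points requiring care are the quadratic-variation term and the use of symmetry. Because $\theta$ is deterministic and of bounded variation in $t$, the drift component $\lambda\varphi_t w\,dt$ of $dw$ contributes nothing to $\langle dw,dw\rangle$, so one may legitimately keep $\langle dw,dw\rangle$ unexpanded rather than tracking the diffusion coefficient of $h$. And the hypothesis $A_i=A_i^{\top}$ is exactly what merges the two first-order terms in the divergence expansion. These are the two places where a careless computation would drop or double-count a term.
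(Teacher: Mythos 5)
Your proof is correct and follows essentially the same route as the paper: rewrite $\theta\,(dh+\sum_i A_i h_{x_i}\,dt)$ in terms of $w$ via $\theta h_{x_i}=w_{x_i}-\lambda\varphi_{x_i}w$ and $\theta\,dh=dw-\lambda\varphi_t w\,dt$, apply It\^o's formula to get $2\langle w,dw\rangle=d\langle w,w\rangle-\langle dw,dw\rangle$, and use the symmetry of $A_i$ to convert $2\langle w,A_i w_{x_i}\rangle$ into $\partial_{x_i}\langle w,A_i w\rangle-\langle w,A_{i,x_i}w\rangle$. The two points you flag for care (the quadratic variation unaffected by the deterministic weight, and the role of symmetry) are exactly the ingredients the paper uses.
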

\begin{proof}

By the definition of $ w $, we have\vspace{-2mm}
\begin{equation*}
\begin{aligned}
\theta \Big(d h + \sum_{i=1}^{n} A_{i} h_{x_{i}} d t\Big)
& =
d w
- \lambda \varphi_{t} w d t
- \lambda \sum_{i=1}^{n} A_{i} \varphi_{x_{i}} w d t
+ \sum_{i=1}^{n} A_{i} w_{x_{i}} d t.
\end{aligned}\vspace{-2mm}
\end{equation*}
Thanks to  It\^o's formula, it holds that\vspace{-2mm}
\begin{equation*}
\begin{aligned}
\langle w,  d w \rangle
&=
\frac{1}{2} d \langle w,  w \rangle
- \frac{1}{2} \langle d w,  d w \rangle.
\end{aligned}\vspace{-2mm}
\end{equation*}
For each $i=1,\dots,n$, we obtain that\vspace{-2mm}
\begin{equation*}
\begin{aligned}
\Big \langle w,  \sum_{i=1}^{n} A_{i} w_{x_{i}} \Big \rangle d t
&=
\frac{1}{2} \sum_{i=1}^{n} \partial_{x_{i}} \langle w,  A_{i} w \rangle d t
- \frac{1}{2} \sum_{i=1}^{n} \langle w ,  A_{i,x_{i}} w \rangle d t.
\end{aligned}\vspace{-2mm}
\end{equation*}
Combining the above identities yields the desired result.
\end{proof}

\begin{proof}[Proof of  \cref{thmObservability}]
We split the proof into three steps.

\textbf{Step 1. Derivation of a Carleman estimate.}

Take $h = z$ in the weighted identity \eqref{eqWeightedIdentity}.
Set $\varphi(t,x) = \beta t + \eta(x)$, where $\beta > 0$ will be fixed later and $\eta \in C^{1}(\overline{G})$ is the function from Condition \ref{cond1}.
Integrating \eqref{eqWeightedIdentity} over $Q_{T} = (0,T) \times G$, using integration by parts, taking expectation, and substituting the adjoint system \eqref{eqAdjoint}, we obtain\vspace{-2mm}
\begin{equation}
\begin{aligned}
\label{eqProfOb1}
&2 \lambda \,\mathbb{E} \int_{0}^{T} \int_{G} \theta
\Big\langle w,\; \Big(dz + \sum_{i=1}^{n} A_{i} z_{x_{i}} \,dt\Big)\Big\rangle dx  \\
&= \lambda \,\mathbb{E} \int_{0}^{T} \int_{\Gamma }
\sum_{i=1}^{n}\big\langle w,\; A_{i}\nu_{i} w\big\rangle \,d\Gamma \,dt
+ \lambda \,\mathbb{E}\!\int_{G} |w(T)|^{2} \,dx
- \lambda \int_{G} |w(0)|^{2} \,dx  \\
&\quad- \lambda \,\mathbb{E}\!\int_{0}^{T} \int_{G} \theta^{2} |Z - B_3^{\top} z|^{2} \,dx\,dt
- \lambda \,\mathbb{E} \int_{0}^{T} \int_{G}
\sum_{i=1}^{n}\big\langle w,\;  A_{i,x_{i}}  w\big\rangle \,dx\,dt  \\
&\quad- 2\lambda^{2} \,\mathbb{E} \int_{0}^{T} \int_{G}
\Big\langle w,\; \Big( \sum_{i=1}^{n} A_{i}\varphi_{x_{i}} + \varphi_{t} \Big) w\Big\rangle \,dx\,dt .
\end{aligned}\vspace{-2mm}
\end{equation}
By the boundary condition $\xi_{+}=0$ and the definition $\xi = \Pi^{-1}z$,\vspace{-2mm}
\begin{equation}
\label{eqProfOb2}
\lambda \,\mathbb{E}\!\int_{0}^{T}\!\!\int_{\Gamma }
\sum_{i=1}^{n}\big\langle w,\; A_{i}\nu_{i} w\big\rangle \,d\Gamma dt
= \lambda \,\mathbb{E}\!\int_{0}^{T}\!\!\int_{\Gamma } \theta^{2}
\langle \xi_{-},\; \Lambda_{-} \xi_{-} \rangle \,d\Gamma dt
\ge - C\lambda \,\mathbb{E}\!\int_{0}^{T}\!\!\int_{\Gamma } \theta^{2} |\xi_{-}|^{2} \,d\Gamma dt.\vspace{-2mm}
\end{equation}
Choose $\beta < c_{0}$ with $c_{0}$ given in Condition \ref{cond1}. Then, because $\varphi_{x_i} = \eta_{x_i}$ and $\varphi_{t} = \beta$, we have\vspace{-2mm}
\begin{equation}
\begin{aligned}
\label{eqProfOb3}
&- 2\lambda^{2} \,\mathbb{E} \int_{0}^{T} \int_{G}
\Big\langle w,\; \Big( \sum_{i=1}^{n} A_{i}\varphi_{x_{i}} + \varphi_{t} \Big) w\Big\rangle \,dx\,dt \\
& = - 2\lambda^{2} \,\mathbb{E} \int_{0}^{T} \int_{G} \theta^{2}
\Big\langle z,\; \Big( \sum_{i=1}^{n} A_{i}\eta_{x_{i}} + \beta \Big) z\Big\rangle \,dx\,dt \\
& \ge 2(c_{0}-\beta)\,\lambda^{2} \,\mathbb{E}\!\int_{0}^{T}\!\!\int_{G} \theta^{2} |z|^{2} \,dx\,dt .
\end{aligned}\vspace{-2mm}
\end{equation}
Moreover,\vspace{-2mm}
\begin{equation}
\label{eqProfOb7}
- \lambda \,\mathbb{E}\!\int_{0}^{T}\!\!\int_{G}
\sum_{i=1}^{n}\big\langle w,\; A_{i,x_{i}} w\big\rangle \,dx\,dt
\ge - C\lambda \,\mathbb{E}\!\int_{0}^{T}\!\!\int_{G} \theta^{2} |z|^{2} \,dx\,dt .\vspace{-2mm}
\end{equation}
Using Cauchy--Schwarz inequality,\vspace{-2mm}
\begin{equation}
\label{eqProfOb111}
- \lambda \,\mathbb{E}\!\int_{0}^{T}\!\!\int_{G} \theta^{2}|Z-B_3^{\top}z|^{2} \,dx\,dt
\ge -2\lambda \,\mathbb{E}\!\int_{0}^{T}\!\!\int_{G} \theta^{2}|Z|^{2} \,dx\,dt
- C\lambda \,\mathbb{E}\!\int_{0}^{T}\!\!\int_{G} \theta^{2}|z|^{2} \,dx\,dt .\vspace{-2mm}
\end{equation}
From \eqref{eqAdjoint} and Cauchy--Schwarz inequality,\vspace{-2mm}
\begin{equation}
\label{eqProfOb4}
2\lambda \,\mathbb{E}\!\int_{0}^{T}\!\!\int_{G} \theta
\Big\langle w,\; \Big(dz + \sum_{i=1}^{n} A_{i}z_{x_{i}}dt\Big)\Big\rangle dx
\le C\lambda \,\mathbb{E}\!\int_{0}^{T}\!\!\int_{G} \theta^{2}|z|^{2} \,dx\,dt
+ C\lambda \,\mathbb{E}\!\int_{0}^{T}\!\!\int_{G} \theta^{2}|Z|^{2} \,dx\,dt .\vspace{-2mm}
\end{equation}
Collecting \eqref{eqProfOb1}--\eqref{eqProfOb4}, and noting that the term \eqref{eqProfOb3} is quadratic in $\lambda$, we know that there exists a constant $\widehat{\lambda}_{1}>0$ such that for all $\lambda \ge \widehat{\lambda}_{1}$,\vspace{-2mm}
\begin{equation}
\begin{aligned}
\label{eqProfOb5}
& C\lambda \,\mathbb{E}\!\int_{0}^{T}\!\!\int_{G} \theta^{2} |Z|^{2} \,dx\,dt
+ C\lambda \,\mathbb{E}\!\int_{0}^{T}\!\!\int_{\Gamma } \theta^{2} |\xi_{-}|^{2} \,d\Gamma dt
+ \lambda \int_{G} \theta(0)^{2}|z(0)|^{2} \,dx \\
&\ge \lambda \,\mathbb{E}\!\int_{G} \theta(T)^{2} |z_{T}|^{2} \,dx
+ \lambda^{2} \,\mathbb{E}\!\int_{0}^{T}\!\!\int_{G} \theta^{2} |z|^{2} \,dx\,dt .
\end{aligned}\vspace{-2mm}
\end{equation}

\textbf{Step 2. Estimating $\displaystyle\int_{G} \theta(0)^{2}|z(0)|^{2} dx$.}

Applying It\^o's formula to $|z|^2$ on $[t_{1},t_{2}] \subset [0,T]$ and using \eqref{eqAdjoint},\vspace{-2mm}
\begin{align*}
&\mathbb{E} \int_{G} |z(t_{2})|^{2} dx - \mathbb{E} \int_{G} |z(t_{1})|^{2} dx \\
&\ge -2\mathbb{E} \int_{t_{1}}^{t_{2}} \int_{\Gamma }
\langle \xi_{-},\; \Lambda_{-}\xi_{-} \rangle \,d\Gamma dt
- C\,\mathbb{E}\!\int_{t_{1}}^{t_{2}} \int_{G} |z|^{2} dx\,dt
+ \frac12 \,\mathbb{E} \int_{t_{1}}^{t_{2}} \int_{G} |Z|^{2} dx\,dt \\
&\ge - C\,\mathbb{E}\!\int_{t_{1}}^{t_{2}} \int_{G} |z|^{2} dx\,dt .
\end{align*}\vspace{-2mm}
Gronwall's inequality then gives\vspace{-2mm}
$$
\int_{G} |z(0)|^{2} dx \le e^{CT} \,\mathbb{E}\!\int_{G} |z_{T}|^{2} dx .\vspace{-2mm}
$$
Recalling $\theta = e^{\lambda\varphi} = e^{\lambda(\beta t+\eta(x))}$, we obtain\vspace{-2mm}
$$
\begin{aligned}
\int_{G} \theta(0)^{2}|z(0)|^{2} dx
&= \int_{G} e^{2\lambda\eta(x)} |z(0)|^{2} dx \le e^{2\lambda \max_{x\in \overline G}\eta(x)} \int_{G} |z(0)|^{2} dx \\
&\le e^{CT + 2\lambda \max_{x\in \overline G}\eta(x)} \,\mathbb{E}\!\int_{G} |z_{T}|^{2} dx \\
&\le e^{CT - 2\lambda\beta T + 2\lambda\big(\max_{x\in \overline G}\eta(x)-\min_{x\in \overline G}\eta(x)\big)}
\; \mathbb{E}\!\int_{G} \theta(T)^{2} |z_{T}|^{2} dx .
\end{aligned}\vspace{-2mm}
$$

\textbf{Step 3. Completion of the proof.}

For $T > T_{0}$ (where $T_{0}$ is given in \eqref{eqMinimumTime}), choose $\beta = \frac{2c_{0}T_{0}}{T+T_{0}} < c_{0}$,
so that\vspace{-2mm}
$$
T > \frac{1}{\beta}\Big(\max_{x\in\overline{G}}\eta(x)-\min_{x\in\overline{G}}\eta(x)\Big)
= \frac{T+T_{0}}{2} > T_{0}.\vspace{-2mm}
$$
Then\vspace{-2mm}
$$
-\beta T + \big(\max_{x\in \overline G}\eta(x)-\min_{x\in \overline G}\eta(x)\big)
= \beta\bigg[ \frac{1}{\beta}\big(\max_{x\in \overline G}\eta(x)-\min_{x\in \overline G}\eta(x)\big) - T \bigg]
= \frac{\beta(T_{0}-T)}{2} < 0.\vspace{-2mm}
$$
Consequently, there exists $\widehat{\lambda}_{2}>0$ such that for all $\lambda \ge \widehat{\lambda}_{2}$,\vspace{-2mm}
\begin{equation}
\label{eqProfOb6}
\int_{G} \theta(0)^{2}|z(0)|^{2} dx \le
\frac12 \,\mathbb{E}\!\int_{G} \theta(T)^{2} |z_{T}|^{2} dx.\vspace{-2mm}
\end{equation}
Finally, set $\widehat{\lambda}_{3} = \max\{\widehat{\lambda}_{1},\widehat{\lambda}_{2}\}$.
For every $\lambda \ge \widehat{\lambda}_{3}$ and $T > T_{0}$, combining \eqref{eqProfOb5} and \eqref{eqProfOb6} yields the observability estimate \eqref{eqObservability}. This completes the proof.
\end{proof}

Finally, we conclude this section by proving Theorem \ref{thmControllability} via the observability estimate established in Theorem \ref{thmObservability}. The argument is a standard duality method; we include it here for completeness.
\begin{proof}[Proof of Theorem \ref{thmControllability}]
Fix initial data $y_{0} \in L^{2}(G; \mathbb{R}^{N})$ and a target state $y_{1} \in L^{2}_{\mathcal{F}_{T}}(\Omega; L^{2}(G; \mathbb{R}^{N}))$.
Define a linear subspace $\mathcal{X}$ of $L^{2}_{\mathbb{F}}(0,T; L^{2}(\Gamma ; \mathbb{R}^{n_{-}})) \times L^{2}_{\mathbb{F}}(0,T; L^{2}(G; \mathbb{R}^{N}))$ by\vspace{-2mm}
$$
\mathcal{X}=
\Big\{
\big(-\Lambda_{-} \xi_{-},\; Z\big)
\;\Big|\;
(z,Z) \text{ solves \eqref{eqAdjoint} with some }
z_{T} \in L^{2}_{\mathcal{F}_{T}}(\Omega; L^{2}(G ; \mathbb{R}^{N}))
\Big\},\vspace{-2mm}
$$
and introduce a linear functional $\mathcal{L}$ on $\mathcal{X}$ by\vspace{-2mm}
$$
\mathcal{L}\big(-\Lambda_{-} \xi_{-},\; Z\big)
= \mathbb{E}\!\int_{G} \langle y_{1},\; z_{T} \rangle\,dx
- \int_{G} \langle y_{0},\; z(0) \rangle\,dx.\vspace{-2mm}
$$
Theorem \ref{thmObservability} guarantees that $\mathcal{L}$ is bounded on $\mathcal{X}$.
By the Hahn--Banach theorem, $\mathcal{L}$ can be extended to a bounded linear functional on the whole space $L^{2}_{\mathbb{F}}(0,T; L^{2}(\Gamma ; \mathbb{R}^{n_{-}})) \times L^{2}_{\mathbb{F}}(0,T; L^{2}(G ; \mathbb{R}^{N}))$; we still denote the extension by $\mathcal{L}$.
Applying the Riesz representation theorem, we obtain a pair\vspace{-2mm}
$$
(u,v) \in L^{2}_{\mathbb{F}}(0,T; L^{2}(\Gamma ; \mathbb{R}^{n_{-}})) \times L^{2}_{\mathbb{F}}(0,T; L^{2}(G; \mathbb{R}^{N}))\vspace{-2mm}
$$
such that\vspace{-2mm}
\begin{equation}
\label{eqProofControllability1}
\mathbb{E}\!\int_{G} \langle y_{1},\; z_{T} \rangle\,dx
- \int_{G} \langle y_{0},\; z(0) \rangle\,dx
= - \mathbb{E}\!\int_{0}^{T}\!\!\int_{\Gamma } \langle u,\; \Lambda_{-} \xi_{-} \rangle\,d\Gamma\,dt
+ \mathbb{E}\!\int_{0}^{T}\!\!\int_{G} \langle v,\; Z \rangle\,dx\,dt .\vspace{-2mm}
\end{equation}

Let $y$ be the transposition solution of \eqref{fq} corresponding to the controls $(u,v)$. According to Definition \ref{defDefinitionTrans},\vspace{-2mm}
\begin{equation}
\label{eqProofControllability2}
\mathbb{E}\!\int_{G} \langle y(T),\; z_{T} \rangle\,dx
- \int_{G} \langle y_{0},\; z(0) \rangle\,dx
= - \mathbb{E}\!\int_{0}^{T}\!\!\int_{\Gamma } \langle u,\; \Lambda_{-} \xi_{-} \rangle\,d\Gamma\,dt
+ \mathbb{E}\!\int_{0}^{T}\!\!\int_{G} \langle v,\; Z \rangle\,dx\,dt .\vspace{-2mm}
\end{equation}

Subtracting \eqref{eqProofControllability1} from \eqref{eqProofControllability2} gives\vspace{-2mm}
$$
\mathbb{E}\!\int_{G} \langle y(T) - y_{1},\; z_{T} \rangle\,dx = 0
\qquad
\text{for all } z_{T} \in L^{2}_{\mathcal{F}_{T}}(\Omega; L^{2}(G; \mathbb{R}^{N})),\vspace{-2mm}
$$
which implies $y(T) = y_{1}$, $\mathbb{P}$-a.s. This completes the proof.
\end{proof}

\section{The lack of the exact controllability} \label{S5}

In this section, we prove Theorems \ref{thm1.2}--\ref{thm1.4}.

\begin{proof}[Proof of Theorem \ref{thm1.2}]
In the case of $B_3=0$ and $u\equiv0$, system \eqref{fq} reads as\vspace{-2mm}
  \begin{equation}\label{5.1}
    \begin{cases}
      dy + \sum_{i=1}^{n} A_i  y_{x_i}\, dt
= B_1 y\, dt + (B_2 y + v)\, dW(t) & \text{in } Q_T,\\
\zeta_{-} = 0 & \text{on } \Sigma_T,\\
y(0) = y_0 & \text{in } G.
    \end{cases}\vspace{-2mm}
  \end{equation}
We only need to prove that the attainable set $\mathbb{A}_{T}$ of system \eqref{5.1} at time $T$ with initial data $y_0=0$ is not $L_{\mathcal{F}_{T}}^2(\Omega;L^2(G;\mathbb{R}^N))$.
  For $y_0=0$ and any $t\in[0,T]$, the solution to \eqref{5.1} satisfies that\vspace{-2mm}
  \begin{equation}\label{5.2}
    y(t)=\int_{0}^{t}S(t-s)B_1y \,ds +\int_{0}^{t}S(t-s)(B_2y+v)\,dW(s),\vspace{-2mm}
  \end{equation}
where $\{S(t)\}_{t\in[0,T]}$ is the $C_0$-semigroup generated by $\cA$. Taking mathematical expectation on both sides of \eqref{5.2}, we obtain that\vspace{-2mm}
  \begin{equation*}
    \dbE(y(t))=\dbE\int_{0}^{t}S(t-s)B_1y \,ds.\vspace{-2mm}
  \end{equation*}
Taking $L^2(G;\mathbb{R}^N)$ norm on both sides and noting that $B_1$ is deterministic, we conclude that
  \begin{equation}\label{5.4}
    \begin{aligned}
      \|\dbE(y(t))\|_{L^2(G;\mathbb{R}^{N})}
      &=\Big\|\int_0^{t}S(t-s)B_1\mathbb{E}(y)\,ds\Big\|_{L^2(G;\mathbb{R}^{N})}\\
      &\leq C_1\int_{0}^{t}e^{C_2(t-s)}\|\mathbb{E}(y(s))\|_{L^2(G;\mathbb{R}^{N})}ds,
    \end{aligned}
  \end{equation}
  where $C_1,C_2>0$ are constants independent on $t$. Denote by
  $$
  Y(t)\deq e^{-C_2t}\|\dbE(y(t))\|_{L^2(G;\mathbb{R}^{N})},
  $$
  then \eqref{5.4} can be rewritten as
  $$
  Y(t)\leq C_1\int_{0}^{t}Y(s)ds.
  $$
Using Gronwall's inequality and noting that $Y(0)=0$, we finally obtain that $Y(t)=0$ for any $t\in[0,T]$, which implies that $\mathbb{E}(y(t))=0$ for any $t\in[0,T]$. Thus, for $y_0=0$, if we choose $y_1\in L^2_{\mathcal{F}_{T}}(\Omega;L^2(G;\mathbb{R}^{N}))$ such that $\dbE(y_1)\neq0$, then $y_1$ is not in the attainable set $\mathbb{A}_T$ of system \eqref{5.1} at time $T$, which completes the proof.
\end{proof}

\begin{remark}
Naturally, we should consider whether the conclusion is valid after removing the assumptions that $B_1$ is deterministic and $B_3=0$. However, we still do not know how to do this, and we remain this to be a future work.
\end{remark}

In order to prove Theorems \ref{thm1.3} and \ref{thm1.4}, we first  recall a useful result.

\begin{lemma}[{\cite[Proposition 6.2]{LZ21}}]
\label{lemmaImpossibleRepresentation}
There exists a random variable $\varrho  \in L_{\mathcal{F}_T}^2(\Omega)$ such that it is impossible to find $(\varsigma_1, \varsigma_2) \in L_{\mathbb{F}}^2(0, T) \times C_{\mathbb{F}}([0, T] ; L^2(\Omega))$ and $\varsigma_0 \in \mathbb{R}$ satisfying that
\begin{equation*}
\varrho =\varsigma_0 +\int_0^T \varsigma_1(t) \mathrm{d} t+\int_0^T \varsigma_2(t) \mathrm{d} W(t).
\end{equation*}
\end{lemma}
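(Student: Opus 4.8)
The plan is to argue by contradiction, extracting from \emph{any} admissible representation a rigid asymptotic invariant attached to the terminal time $T$, and then to exhibit a $\varrho$ that violates it. Suppose $\varrho = \varsigma_0 + \int_0^T \varsigma_1\,dt + \int_0^T \varsigma_2\,dW$ with $\varsigma_1 \in L_{\mathbb{F}}^2(0,T)$, $\varsigma_2 \in C_{\mathbb{F}}([0,T];L^2(\Omega))$ and $\varsigma_0 \in \mathbb{R}$, and let $\psi \in L_{\mathbb{F}}^2(0,T)$ be the unique integrand in the martingale representation $\varrho = \mathbb{E}\varrho + \int_0^T \psi\,dW$. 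First I would pass to the martingale $M_t = \mathbb{E}[\varrho \mid \mathcal{F}_t]$; conditioning the assumed representation and using that $\int_0^{\cdot}\varsigma_2\,dW$ is a martingale gives $M_t = \varsigma_0 + A_t + \int_0^t \varsigma_2\,dW$, where $A_t := \mathbb{E}[\int_0^T \varsigma_1\,ds \mid \mathcal{F}_t] = \mathbb{E}A_T + \int_0^t \phi\,dW$ is itself a martingale with some integrand $\phi \in L_{\mathbb{F}}^2(0,T)$. Matching integrands via uniqueness of the martingale representation then yields the decomposition $\psi = \phi + \varsigma_2$ almost everywhere.

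The key step is to compare the two pieces through their \emph{normalized terminal energies} $e_X(t) := \tfrac{1}{T-t}\,\mathbb{E}\int_t^T |X|^2\,ds$. For the drift part, since $\int_0^t\varsigma_1\,ds$ is $\mathcal{F}_t$-measurable one has $A_T - A_t = \int_t^T \varsigma_1\,ds - \mathbb{E}[\int_t^T \varsigma_1\,ds \mid \mathcal{F}_t]$, and hence by It\^o's isometry
\[
\mathbb{E}\int_t^T \phi^2\,ds = \mathbb{E}|A_T - A_t|^2 \le \mathbb{E}\Big(\int_t^T \varsigma_1\,ds\Big)^2 \le (T-t)\,\mathbb{E}\int_t^T \varsigma_1^2\,ds,
\]
so that $e_\phi(t) \le \mathbb{E}\int_t^T \varsigma_1^2\,ds \to 0$ as $t \to T$. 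For the diffusion part, the continuity of $s \mapsto \varsigma_2(s)$ into $L^2(\Omega)$ makes $s \mapsto \|\varsigma_2(s)\|_{L^2(\Omega)}^2$ continuous, whence, by Fubini, $e_{\varsigma_2}(t) = \tfrac{1}{T-t}\int_t^T \|\varsigma_2(s)\|_{L^2(\Omega)}^2\,ds \to \|\varsigma_2(T)\|_{L^2(\Omega)}^2$. Viewing $\sqrt{e_X(t)}$ as a (normalized) $L^2$-norm on $\Omega \times (t,T)$ and applying the triangle inequality to $\psi = \phi + \varsigma_2$ forces $e_\psi(t) \to \|\varsigma_2(T)\|_{L^2(\Omega)}^2$. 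Thus, \emph{if} $\varrho$ admits an admissible representation, the limit $\lim_{t \to T} \tfrac{1}{T-t}\,\mathbb{E}\int_t^T \psi^2\,ds$ must exist and be finite.

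It then remains to produce a $\varrho$ for which this normalized terminal energy oscillates. I would take a deterministic integrand $\psi = \mathbf{1}_E$ with $E \subset (0,T)$ chosen so that its local density at $T$ has no limit; for instance, with $t_k = T(1-2^{-k})$ increasing to $T$, set $E = \bigcup_{k\ \mathrm{even}} [t_k, t_{k+1})$ and define $\varrho := \int_0^T \mathbf{1}_E\,dW \in L_{\mathcal{F}_T}^2(\Omega)$, whose representation integrand is exactly $\mathbf{1}_E$. A direct computation along the sequence $t = t_k$ shows that $\tfrac{1}{T-t_k}\int_{t_k}^T \mathbf{1}_E\,ds$ alternates between two distinct values as $k$ ranges over even and odd integers, so the required limit fails to exist. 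This contradicts the rigidity established above, and therefore no admissible triple $(\varsigma_0,\varsigma_1,\varsigma_2)$ can represent this $\varrho$.

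I expect the main obstacle to be the second paragraph: identifying the correct rigid invariant (the normalized terminal energy) and proving that the drift contributes nothing to it in the limit. This rests on the two ingredients $\mathbb{E}|A_T - A_t|^2 \le (T-t)\,\mathbb{E}\int_t^T \varsigma_1^2\,ds$ and the uniqueness of the martingale representation that pins down $\psi = \phi + \varsigma_2$; the contrast with the \emph{continuous} integrand $\varsigma_2$, whose normalized terminal energy converges to $\|\varsigma_2(T)\|_{L^2(\Omega)}^2$, is precisely what the continuity hypothesis $\varsigma_2 \in C_{\mathbb{F}}([0,T];L^2(\Omega))$ buys us. Once this rigidity is in hand the explicit oscillating construction is routine, and I would only need to verify the interchanges of expectation and time-integration and confirm that all processes lie in the required $L^2$ spaces.
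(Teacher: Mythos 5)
The paper itself offers no proof of this lemma---it is imported verbatim as \cite[Proposition 6.2]{LZ21}---so there is nothing internal to compare against; your self-contained argument is correct and follows the same underlying strategy as the cited source: identify the martingale-representation integrand $\psi=\phi+\varsigma_2$, show the drift's contribution to the normalized terminal energy $\frac{1}{T-t}\mathbb{E}\int_t^T|\cdot|^2\,ds$ vanishes as $t\to T$ while the $L^2(\Omega)$-continuity of $\varsigma_2$ forces that energy to converge to $\|\varsigma_2(T)\|_{L^2(\Omega)}^2$, and then pick $\varrho=\int_0^T\mathbf{1}_E\,dW$ with $E$ having oscillating density at $T$ (your $2/3$ versus $1/3$ computation checks out). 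The only points worth writing out in a final version are the $d\mathbb{P}\otimes dt$-a.e.\ identification $\psi=\phi+\varsigma_2$ via uniqueness of the martingale representation and the fact that $C_{\mathbb{F}}([0,T];L^2(\Omega))\subset L^2_{\mathbb{F}}(0,T)$, both of which are routine.
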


\medskip


\begin{proof}[Proof of Theorem \ref{thm1.3}]
Let $\chi_1 \in C_0^{\infty}(G \setminus G_0 ; \mathbb{R}^N)$, where $G_0$ is the open subset introduced in Theorem \ref{thm1.3}, 
be a fixed function with $\|\chi_1\|_{L^2(G ; \mathbb{R}^N)} = 1$.
Assume, for contradiction, that system \eqref{fq} is exactly controllable at time $T$. Then for any initial state $y_0 \in L^2(G ; \mathbb{R}^N)$ there exist controls  $
(u,v) \in L_{\mathbb{F}}^2(0,T; L^2(\Gamma ; \mathbb{R}^{n_{-}})) \times L_{\mathbb{F}}^2(0,T; L^2(G ; \mathbb{R}^N))
$
with $\operatorname{supp} v \subset G_0$ such that the corresponding solution of \eqref{fq} satisfies $y(T) = \chi_1 \varrho$, where $\varrho$ is the random variable given in Lemma \ref{lemmaImpossibleRepresentation}.

Recall the approximating sequence $\{y_m\}_{m=1}^{\infty}$ constructed in the proof of Proposition \ref{wellposednessfq}, where $y_m = \tilde{y}_m + \widetilde{U}_m$.
Since $\tilde{y}_m$ is the mild solution of \eqref{zeroboundarydata}, it is also a weak solution. Integrating by parts in \eqref{zeroboundarydata} and using the equation for $\widetilde{U}_m$, we obtain\vspace{-2mm}
\begin{align*}
&\int_{G}\big\langle \tilde{y}_{m}(T),\;\chi_1\big\rangle dx - \int_{G}\big\langle y_0,\;\chi_1\big\rangle dx \\
&= \int_0^T \!\int_{G} \Big\langle -\sum_{i=1}^{n}A_i \tilde{y}_{m,x_i}+B_1\tilde{y}_m
-\sum_{i=1}^{n}A_i\widetilde{U}_{m,x_i}+B_1\widetilde{U}_m-\widetilde{U}_{m,t},\; \chi_1\Big\rangle dx\,dt \\
&\quad + \int_0^T\!\int_{G} \big\langle B_2(\tilde{y}_m+\widetilde{U}_m),\;\chi_1\big\rangle dx\,dW(t) \\
&= \int_{0}^{T}\!\!\int_{G} \Big\langle y_m,\; \sum_{i=1}^{n}(A_i\chi_1)_{x_i}+B_1^{\top}\chi_1\Big\rangle dx\,dt
+ \int_0^T\!\!\int_{G} \big\langle y_m,\; B_2^{\top}\chi_1\big\rangle dx\,dW(t) - \int_{G}\big\langle \widetilde{U}_m(T),\;\chi_1\big\rangle dx .
\end{align*}\vspace{-2mm}

Passing to the limit $m \to \infty$ yields\vspace{-2mm}
$$
\begin{aligned}
\int_G\big\langle y(T),\; \chi_1\big\rangle dx
&= \int_G\big\langle y_0,\; \chi_1\big\rangle dx
+ \int_0^T\!\!\int_{G} \Big\langle y,\; \sum_{i=1}^{n}(A_i\chi_1)_{x_i}+B_1^{\top}\chi_1\Big\rangle dx\,dt \\
&\quad + \int_0^T\!\!\int_{G} \big\langle y,\; B_2^{\top}\chi_1\big\rangle dx\,dW(t).
\end{aligned}\vspace{-2mm}
$$
Because $y(T) = \chi_1 \varrho$ and $\|\chi_1\|_{L^2(G;\mathbb{R}^N)}=1$, we deduce\vspace{-2mm}
\begin{equation}\label{eq5.6}
\begin{aligned}
\varrho
&= \int_G\big\langle \chi_1\varrho,\; \chi_1\big\rangle dx
= \int_G\big\langle y(T),\; \chi_1\big\rangle dx \\
&= \int_G\big\langle y_0,\; \chi_1\big\rangle dx
+ \int_0^T\!\!\int_{G} \Big\langle y,\; \sum_{i=1}^{n}(A_i\chi_1)_{x_i}+B_1^{\top}\chi_1\Big\rangle dx\,dt  + \int_0^T\!\!\int_{G} \big\langle y,\; B_2^{\top}\chi_1\big\rangle dx\,dW(t).
\end{aligned}\vspace{-2mm}
\end{equation}
From $y \in C_{\mathbb{F}}([0,T]; L^2(\Omega ; L^2(G ; \mathbb{R}^N)))$ and the assumption
$B_2 \in C_{\mathbb{F}}([0,T]; L^{\infty}(\Omega ; L^{\infty}(G ; \mathbb{R}^{N \times N})))$, it follows that\vspace{-2mm}
$$
\int_{G}\Big\langle y,\; \sum_{i=1}^{n}(A_i\chi_1)_{x_i}+B_1^{\top}\chi_1\Big\rangle dx \in L_{\mathbb{F}}^2(0,T),\vspace{-2mm}
$$
and\vspace{-2mm}
$$
\int_{G}\big\langle y,\; B_2^{\top}\chi_1\big\rangle dx \in C_{\mathbb{F}}([0,T]; L^2(\Omega)).\vspace{-2mm}
$$

These regularity properties make the right‑hand side of \eqref{eq5.6} a stochastic process that belongs to the class described in Lemma \ref{lemmaImpossibleRepresentation}. However, the random variable $\varrho$ itself does  not  admit such a representation, by the statement of that lemma. This contradiction shows that system \eqref{fq} cannot be exactly controllable when the distributed control $v$ is supported only in $G_0$. Hence the proof is complete.
\end{proof}
\begin{proof}[Proof of Theorem \ref{thm1.4}]
The argument parallels that of Theorem \ref{thm1.3}.
Pick $\chi_2 \in C_0^{\infty}(G ; \mathbb{R}^N)$ with $\|\chi_2\|_{L^2(G ; \mathbb{R}^N)} = 1$.
Assume, for a contradiction, that system \eqref{1.12} is exactly controllable at time $T$.
Then for every initial state $y_0 \in L^2(G; \mathbb{R}^N)$ there exist controls  $
(u_1, v_1) \in L_{\mathbb{F}}^2(0,T; L^2(\Gamma ; \mathbb{R}^{n_{-}})) \times L_{\mathbb{F}}^2(0,T; L^2(G ; \mathbb{R}^N))
$
such that the corresponding solution of \eqref{1.12} satisfies $y(T) = \chi_2 \varrho$, where $\varrho$ is the random variable given in Lemma \ref{lemmaImpossibleRepresentation}.

Proceeding as in the proof of Theorem \ref{thm1.3}, we obtain\vspace{-2mm}
\begin{equation} \label{eq5.7}
\begin{aligned}
\varrho
&= \int_G \big\langle \chi_2 \varrho,\; \chi_2 \big\rangle dx
= \int_G \big\langle y(T),\; \chi_2 \big\rangle dx \\
&= \int_G \big\langle y_0(x),\; \chi_2(x) \big\rangle dx
+ \int_0^T\!\!\int_G \Big[\,\Big\langle y,\; \sum_{i=1}^n (A_i \chi_2)_{x_i}+B_1^{\top} \chi_2\Big\rangle
+ \langle v_1,\; \chi_2 \rangle\Big] dx\,dt \\
&\quad + \int_0^T\!\!\int_G \big\langle y,\; B_2^{\top}\chi_2 \big\rangle dx\,dW(t).
\end{aligned}\vspace{-2mm}
\end{equation}
Because $y \in C_{\mathbb{F}}([0,T]; L^2(\Omega ; L^2(G ; \mathbb{R}^N)))$ and
$B_2 \in C_{\mathbb{F}}([0,T]; L^{\infty}(\Omega ; L^{\infty}(G ; \mathbb{R}^{N \times N})))$, we have\vspace{-2mm}
$$
\int_G \Big[\,\Big\langle y,\; \sum_{i=1}^n (A_i \chi_2)_{x_i}+B_1^{\top} \chi_2\Big\rangle
+ \langle v_1,\; \chi_2 \rangle\Big] dx \;\in\; L_{\mathbb{F}}^2(0,T),\vspace{-2mm}
$$
and\vspace{-2mm}
$$
\int_G \big\langle y,\; B_2^{\top}\chi_2 \big\rangle dx \;\in\; C_{\mathbb{F}}([0,T]; L^2(\Omega)).\vspace{-2mm}
$$
Consequently, the right‑hand side of \eqref{eq5.7} belongs to the class of stochastic processes described in Lemma \ref{lemmaImpossibleRepresentation}, whereas the random variable $\varrho$ itself does  not  admit such a representation. This contradiction shows that system \eqref{1.12} cannot be exactly controllable. The proof is complete.
\end{proof}

\section* {Acknowledgements}
 This paper was completed during the visit of Y. Wang and H. Yang to Prof. Zuazua at the FAU DCN–AvH Chair. They sincerely thank Prof.~Zuazua for his hospitality and for many fruitful discussions.

\begin{appendices}

\section{The proof of Lemma \ref{C0semigroup}} \label{appendixA}

The main goal of this section is to prove  Lemma \ref{C0semigroup}.

We first recall a key trace regularity fact.
\begin{lemma}[{\cite[Proposition 6.8]{CP11}}] \label{L2}
There exists a constant $C>0$ such that for every $\mathbf y \in L^2(G;\mathbb R^N)$ with $\mathcal A\mathbf y \in L^2(G;\mathbb R^N)$, the trace $
\Big(\sum_{i=1}^n \nu_i(x)A_i(x)\Big)\mathbf y \;\Big|_{\Gamma }
$ 
belongs to $L^{2}(\Gamma ;\mathbb R^N)$ and satisfies
$$
\Big\| \Big(\sum_{i=1}^n \nu_i(\cdot)A_i(\cdot)\Big)\mathbf y\Big\|_{L^{2}(\Gamma ;\mathbb R^N)}
\le C\bigl(\|\mathbf y\|_{L^2(G;\mathbb R^N)}+\|\mathcal A\mathbf y\|_{L^2(G;\mathbb R^N)}\bigr).
$$
\end{lemma}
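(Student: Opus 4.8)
The plan is to prove the a priori bound first for smooth fields $\mathbf y\in C^{1}(\overline G;\mathbb R^{N})$ and then to pass to general $\mathbf y$ with $\mathbf y,\mathcal A\mathbf y\in L^{2}(G;\mathbb R^{N})$ by a density argument in the graph norm. The natural starting point is the energy--flux (Green) identity for the symmetric system: since each $A_i$ is symmetric, $\sum_{i}\partial_{x_i}\langle A_i\mathbf y,\mathbf y\rangle=\sum_i\langle A_{i,x_i}\mathbf y,\mathbf y\rangle-2\langle\mathcal A\mathbf y,\mathbf y\rangle$, so that after integrating over $G$ and applying the divergence theorem,
\[
\int_{\Gamma}\Big\langle\Big(\sum_{i=1}^n\nu_iA_i\Big)\mathbf y,\,\mathbf y\Big\rangle\,d\Gamma=\int_G\Big[\sum_{i=1}^n\langle A_{i,x_i}\mathbf y,\mathbf y\rangle-2\langle\mathcal A\mathbf y,\mathbf y\rangle\Big]\,dx,
\]
whose right-hand side is bounded by $C(\|\mathbf y\|_{L^2}^2+\|\mathcal A\mathbf y\|_{L^2}^2)$. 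This already controls the \emph{signed} boundary form $\langle M\mathbf y,\mathbf y\rangle$, where $M=\sum_i\nu_iA_i$, but it does not yet control the full norm $\int_\Gamma|M\mathbf y|^2$.

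The crux is to upgrade this to an $L^2$ trace bound. The key structural fact is that, by \eqref{boundarymatrix}, $M=\Pi\Lambda\Pi^{-1}$ annihilates the characteristic block $\Lambda_0=0$, so $M\mathbf y=\Pi(\Lambda_+\xi_+,0,\Lambda_-\xi_-)^{\top}$ involves only the non-characteristic components $\xi_{\pm}$, on which $M$ is invertible; these are exactly the components genuinely propagated in the normal direction. To exploit this I would localize with a partition of unity subordinate to the finitely many boundary pieces on which $n_+$ and $n_-$ are constant, and flatten $\Gamma$ so that locally $G=\{x_n>0\}$ and $M=\mp A_n|_{x_n=0}$. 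Note that a naive symmetric multiplier (taking $M\mathbf y$, or its positive part, as test field) merely reproduces the divergence-theorem tautology and yields no new information, so one is forced to treat $x_n$ as an evolution variable.

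For the estimate itself I would split $\mathbf y$ through the spectral projections $P_{\pm},P_0$ of $A_n$, which are smooth because $n_{\pm}$ are locally constant and $\Pi$ depends $C^1$-smoothly on the boundary point. On $\mathrm{range}(P_++P_-)$ the coefficient $A_n$ restricts to an invertible symmetric matrix, and the non-characteristic block satisfies a genuine first-order system in $x_n$ with source $-\mathcal A\mathbf y$ together with tangential terms. Testing this system against $\mathrm{sgn}(A_n)\mathbf y$ (or, equivalently, taking the tangential Fourier transform and integrating the resulting ODE in $x_n$) produces the coercive boundary quantity $\int_{\{x_n=0\}}\langle|A_n|\mathbf y,\mathbf y\rangle\,dx'\ge c\int(|\xi_+|^2+|\xi_-|^2)\,dx'$, while the top-order tangential contributions appear as exact tangential divergences and integrate to zero over $\mathbb R^{n-1}$. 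Bounding the remaining interior terms by $\|\mathbf y\|_{L^2}^2+\|\mathcal A\mathbf y\|_{L^2}^2$, summing over the partition, and undoing the flattening gives $\int_\Gamma|M\mathbf y|^2\le C(\|\mathbf y\|_{L^2}^2+\|\mathcal A\mathbf y\|_{L^2}^2)$.

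Finally I would extend the bound from $C^1(\overline G)$ to the graph-norm closure: since mollification near $\Gamma$ is delicate, I would use a Friedrichs-type regularization combining an inward normal translation with mollification, so the approximants are smooth up to $\Gamma$, converge to $\mathbf y$ in the graph norm, and carry the uniform trace estimate to the limit. I expect the main obstacle to be precisely the passage from the signed form $\langle M\mathbf y,\mathbf y\rangle$ to the full trace norm $|M\mathbf y|^2$: because every symmetric multiplier only recovers Green's identity, the separation of the incoming and outgoing non-characteristic components must be achieved through the normal-direction evolution structure (via $\mathrm{sgn}(A_n)$ or the tangential Fourier transform), and the uncontrolled tangential derivatives must be arranged to enter only in divergence form.
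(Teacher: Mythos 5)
First, a point of comparison: the paper contains no proof of this lemma at all --- it is imported verbatim as a citation of \cite[Proposition 6.8]{CP11} --- so there is no internal argument to measure yours against, and your attempt must be judged on its own. Your opening step is correct: the identity $\sum_{i}\partial_{x_i}\langle A_i\mathbf y,\mathbf y\rangle=\sum_i\langle A_{i,x_i}\mathbf y,\mathbf y\rangle-2\langle\mathcal A\mathbf y,\mathbf y\rangle$ and the divergence theorem control the signed boundary form $\int_\Gamma\langle M\mathbf y,\mathbf y\rangle\,d\Gamma$ with $M=\sum_i\nu_iA_i$, and your diagnosis that this indefinite form ($\langle\Lambda_+\xi_+,\xi_+\rangle+\langle\Lambda_-\xi_-,\xi_-\rangle$) does not control $\int_\Gamma|M\mathbf y|^2$ is exactly right. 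The gap is in the upgrade. After flattening, the term $\langle A_n\partial_n\mathbf y,\mathrm{sgn}(A_n)\mathbf y\rangle$ does produce the coercive boundary quantity, but the tangential terms $\langle\tilde A_j\partial_j\mathbf y,\mathrm{sgn}(A_n)\mathbf y\rangle$ are exact tangential divergences modulo zeroth-order terms \emph{only if} $\mathrm{sgn}(A_n)\tilde A_j$ is symmetric, i.e.\ only if $\mathrm{sgn}(A_n)$ commutes with $\tilde A_j$. When it does not, the leftover is $\tfrac12\langle[\tilde A_j,\mathrm{sgn}(A_n)]\partial_j\mathbf y,\mathbf y\rangle$, a genuine tangential derivative of $\mathbf y$ that is not controlled by $\|\mathbf y\|_{L^2}+\|\mathcal A\mathbf y\|_{L^2}$; in your tangential-Fourier variant this term has size $|\xi'|\,|\widehat{\mathbf y}|^2$, which is precisely why the conormal trace of the maximal domain is in general only in $H^{-1/2}(\Gamma)$. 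You flag this as ``the main obstacle,'' but the proposal supplies no mechanism for removing it.

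The obstruction is not merely technical: the estimate you are trying to prove fails in the stated generality, so no argument can close this gap. Take $n=N=2$, $A_1=\mathrm{diag}(1,-1)$, $A_2=\begin{pmatrix}0&1\\1&0\end{pmatrix}$ (which anticommute), and let $G$ be a disc in the upper half-plane tangent to the $x_1$-axis at the origin; then $\sum_i\nu_iA_i$ has eigenvalues $\pm1$, so $n_+=n_-=1$ on all of $\Gamma$ and every standing hypothesis of the lemma holds. The equation $A_1\partial_1\mathbf y+A_2\partial_2\mathbf y=0$ is solved by $\mathbf y=(\operatorname{Re}h,\,-\operatorname{Im}h)$ for any $h$ holomorphic on the upper half-plane (it is the Cauchy--Riemann system). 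Choosing $h(z)=z^{-3/4}$ and multiplying by a cutoff supported near the origin yields $\mathbf y\in L^2(G;\mathbb R^2)$ with $\mathcal A\mathbf y\in L^2(G;\mathbb R^2)$, smooth up to $\Gamma\setminus\{0\}$, while $|M\mathbf y|=|h|\sim|x_1|^{-3/4}$ on $\Gamma$ near the origin, which is not square-integrable; equivalently, the a priori bound blows up along the graph-norm approximants $\mathbf y(\cdot,\cdot+\epsilon)$. An $L^2$ conormal trace on the \emph{maximal} domain therefore requires additional input --- a boundary condition rendering the boundary form definite, commuting (simultaneously diagonalizable) coefficients, or a positivity/hyperbolicity structure such as Condition \ref{cond1} restricted to suitable boundary portions --- none of which your argument invokes. (This also indicates that the lemma as transcribed here is stronger than what the cited source can deliver without such hypotheses, a point worth checking against \cite{CP11} directly.)
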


For any $\mathbf y \in L^2(G;\mathbb R^N)$ with $\mathcal A\mathbf y \in L^2(G;\mathbb R^N)$, Lemma \ref{L2} guarantees that the boundary term is well defined. Using the diagonalization provided by $\Pi$, we obtain
$$
\begin{aligned}
\Pi^{-1}\Big(\sum_{i=1}^n \nu_i A_i\Big)\mathbf y\Big|_{\Gamma }
&= \Pi^{-1}\Big(\sum_{i=1}^n \nu_i A_i\Big)\Pi\;\Pi^{-1}\mathbf y\Big|_{\Gamma } \\
&= \begin{pmatrix}
\Lambda_+ & 0 & 0 \\
0 & 0 & 0 \\
0 & 0 & \Lambda_-
\end{pmatrix}
\begin{pmatrix}
\widetilde{\mathbf y}_+ \\[2pt]
\widetilde{\mathbf y}_0 \\[2pt]
\widetilde{\mathbf y}_-
\end{pmatrix}
= \begin{pmatrix}
\Lambda_+ \widetilde{\mathbf y}_+ \\[2pt]
0 \\[2pt]
\Lambda_- \widetilde{\mathbf y}_-
\end{pmatrix} \in L^2(\Gamma ;\mathbb R^N).
\end{aligned}
$$
Because $\Pi^{-1}(x)$ is bounded and the diagonal matrices $\Lambda_{\pm}^{-1}(x)$ are uniformly bounded, we conclude that both the incoming variable $\widetilde{\mathbf y}_{-}$ and the outgoing variable $\widetilde{\mathbf y}_{+}$ are bounded in the $L^{2}(\Gamma)$-norm.

\begin{lemma}
\label{lemmaAdjoint}
The adjoint operator of $\mathcal{A}$ is\vspace{-2mm}
$$
\mathcal{A}^*\Bz \deq \sum\limits_{i=1}^n A_i(x) \Bz_{x_i}+\sum_{i=1}^n A_{i,x_i}(x)\Bz,\vspace{-2mm}
$$
with the domain\vspace{-2mm}
$$
D(\mathcal{A}^*)=\{\Bz \in L^2( G;\mathbb R^N)\mid \mathcal{A}^*\Bz \in L^2( G;\mathbb R^N),~\wt{\Bz}_+=0\},\vspace{-2mm}
$$
where $\wt{\Bz}_+$ is given by the decomposition\vspace{-2mm}
$$
\wt{\Bz}=\begin{pmatrix}
\wt{\Bz}_+,
\wt{\Bz}_0,
\wt{\Bz}_-
\end{pmatrix}^\top=\Pi^{-1}\Bz.\vspace{-2mm}
$$
\end{lemma}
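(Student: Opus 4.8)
The plan is to compute $\mathcal A^{*}$ directly from the definition of the Hilbert-space adjoint, the sole analytic input being a generalized Green (integration-by-parts) formula made meaningful by the trace regularity of \cref{L2}. Write $\mathcal A^{\sharp}\Bz \deq \sum_{i=1}^{n} A_i \Bz_{x_i} + \sum_{i=1}^{n} A_{i,x_i}\Bz$ for the claimed expression. First I would record the Green formula: for every pair $\mathbf y,\Bz \in L^2(G;\mathbb R^N)$ with $\mathcal A\mathbf y,\mathcal A^{\sharp}\Bz \in L^2(G;\mathbb R^N)$,
$$
\langle \mathcal A\mathbf y,\Bz\rangle_{L^2(G)} - \langle \mathbf y,\mathcal A^{\sharp}\Bz\rangle_{L^2(G)}
= -\int_{\Gamma}\Big\langle \Big(\sum_{i=1}^{n}\nu_i A_i\Big)\mathbf y,\;\Bz\Big\rangle d\Gamma .
$$
For smooth $\mathbf y,\Bz$ this is classical integration by parts combined with the symmetry $\langle A_i a,b\rangle = \langle a,A_i b\rangle$ and the identity $(A_i\Bz)_{x_i}=A_{i,x_i}\Bz+A_i\Bz_{x_i}$; in the $L^2$ graph-norm setting it is obtained by density of smooth functions, the boundary term being well defined by \cref{L2}.

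Next I would rewrite the boundary integrand in the frame that diagonalizes $\sum_i\nu_i A_i$. Setting $\widetilde{\mathbf y}=\Pi^{-1}\mathbf y$ and $\widetilde{\Bz}=\Pi^{-1}\Bz$ and using the orthogonality of $\Pi$ together with \eqref{boundarymatrix}, the integrand becomes $\langle\widetilde{\mathbf y},\Lambda\widetilde{\Bz}\rangle = \langle\widetilde{\mathbf y}_{+},\Lambda_{+}\widetilde{\Bz}_{+}\rangle + \langle\widetilde{\mathbf y}_{-},\Lambda_{-}\widetilde{\Bz}_{-}\rangle$, the middle block dropping out since the corresponding block of $\Lambda$ vanishes. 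For the inclusion $\{\Bz:\mathcal A^{\sharp}\Bz\in L^2,\ \widetilde{\Bz}_{+}=0\}\subseteq D(\mathcal A^{*})$, take such a $\Bz$ and any $\mathbf y\in D(\mathcal A)$: then $\widetilde{\mathbf y}_{-}=0$ and $\widetilde{\Bz}_{+}=0$, so both surviving boundary contributions vanish, and the Green formula reduces to $\langle\mathcal A\mathbf y,\Bz\rangle=\langle\mathbf y,\mathcal A^{\sharp}\Bz\rangle$ for all $\mathbf y\in D(\mathcal A)$, which is precisely the statement that $\Bz\in D(\mathcal A^{*})$ with $\mathcal A^{*}\Bz=\mathcal A^{\sharp}\Bz$.

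For the reverse inclusion, let $\Bz\in D(\mathcal A^{*})$, so $\langle\mathcal A\mathbf y,\Bz\rangle=\langle\mathbf y,\mathcal A^{*}\Bz\rangle$ for all $\mathbf y\in D(\mathcal A)$. Testing against $\mathbf y\in C_0^{\infty}(G;\mathbb R^N)\subset D(\mathcal A)$ identifies $\mathcal A^{*}\Bz=\mathcal A^{\sharp}\Bz$ in the sense of distributions; in particular $\mathcal A^{\sharp}\Bz\in L^2$, so $\Bz$ acquires a trace via \cref{L2}. Feeding this back into the Green formula leaves $\int_{\Gamma}\langle\widetilde{\mathbf y}_{+},\Lambda_{+}\widetilde{\Bz}_{+}\rangle d\Gamma = 0$ for every $\mathbf y\in D(\mathcal A)$. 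Since $\widetilde{\mathbf y}_{+}$ may be prescribed over a dense subset of $L^2(\Gamma;\mathbb R^{n_{+}})$ while keeping $\widetilde{\mathbf y}_{-}=0$, and $\Lambda_{+}$ is invertible, this forces $\widetilde{\Bz}_{+}=0$, completing the characterization of $D(\mathcal A^{*})$.

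The main obstacle I anticipate is the rigorous justification of the generalized Green formula and the surrounding trace theory rather than the algebra: one must show that smooth functions are dense in the graph norm of $\mathcal A$ (so that no boundary mass is lost in the limit), and that the boundary map $\mathbf y\mapsto\widetilde{\mathbf y}_{+}$ has dense range in $L^2(\Gamma;\mathbb R^{n_{+}})$ under the constraint $\widetilde{\mathbf y}_{-}=0$. Both points are delicate because the splitting $\mathbb R^{N}=\mathbb R^{n_{+}}\oplus\mathbb R^{n_{0}}\oplus\mathbb R^{n_{-}}$ varies with $x\in\Gamma$ (only piecewise constant) and the traces exist only in the weak sense furnished by \cref{L2}; this is exactly where the trace machinery for Friedrichs systems in \cite{CP11} is invoked.
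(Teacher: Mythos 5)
Your proposal is correct and, on the half it shares with the paper, follows the same route: the paper's entire proof is your ``forward'' step --- integration by parts, conjugation by $\Pi$ using \eqref{boundarymatrix}, and the observation that $\wt{\By}_-=0$ together with $\wt{\Bz}_+=0$ annihilates the boundary term, yielding $\langle\mathcal A\By,\Bz\rangle=\langle\By,\mathcal A^*\Bz\rangle$. The genuine difference is that the paper stops there: it only verifies that the operator described in the lemma is a \emph{restriction} of the Hilbert-space adjoint, taking $\Bz$ in the claimed domain from the outset, and never addresses the reverse inclusion $D(\mathcal A^*)\subseteq\{\Bz \mid \mathcal A^*\Bz\in L^2(G;\mathbb R^N),\ \wt{\Bz}_+=0\}$. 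Your proposal supplies that missing half by the standard scheme: distributional identification of $\mathcal A^*\Bz$ against $C_0^\infty(G;\mathbb R^N)$, recovery of the trace of $\Bz$ through \cref{L2} (which applies since the zeroth-order term $\sum_i A_{i,x_i}\Bz$ does not change the graph space), and then density of the outgoing trace range $\{\wt{\By}_+ : \By\in D(\mathcal A)\}$ in $L^2(\Gamma;\mathbb R^{n_+})$ to force $\wt{\Bz}_+=0$. This is not a cosmetic addition: the reverse inclusion is what makes the displayed set the \emph{actual} adjoint domain, and it is tacitly needed when the quasi-dissipativity estimate for $\mathcal A^*$ in the proof of \cref{C0semigroup} is fed into the Lumer--Phillips corollary. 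The one step you assert rather than prove --- the density of the outgoing trace range under the constraint $\wt{\By}_-=0$ --- is indeed the only nontrivial ingredient of that half; it can be obtained by lifting smooth boundary data supported in the interior of each piece of $\Gamma$ on which $n_{\pm}$ is constant and $\Pi$ is $C^1$, and you are right that this is exactly where the Friedrichs-system trace machinery of \cite{CP11} enters. In short: identical computation where the proofs overlap, but your version is the logically complete one.
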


\begin{proof}
For $y \in D(\mathcal A)$ and $z \in D(\mathcal A^*)$, note that the boundary value $\wt{\By}_{+}$ and $\wt{\Bz}_{+}$ (resp. $\wt{\By}_{-}$ and $\wt{\Bz}_{-}$) are in the space $L^2(\Gamma ;\mathbb R^{n_{+}})$ (resp. $L^2(\Gamma ;\mathbb R^{n_{-}})$), hence the following integration by parts is legitimate.
We have\vspace{-2mm}
\begin{align*}
\langle \mathcal{A}\By,\Bz \rangle_{L^2( G;\mathbb R^N)}
=&
-\sum_{i=1}^n\int_G \langle \By_{x_i}, A_i(x) \Bz \rangle dx
\\
=&
-\sum_{i=1}^n\int_G \langle \By, (A_i(x) \Bz)_{x_i} \rangle dx+\int_G \Big \langle \By,  \Big [\sum_{i=1}^n A_i(x) \Bz_{x_i}+\sum_{i=1}^nA_{i,x_i}(x)\Bz \Big ] \Big \rangle dx
\\
=&-\int_{\Gamma }  \Big \langle \Pi^{-1}\By, \Pi^{-1}\Big (\sum_{i=1}^n \nu_i A_i\Big) \Pi (\Pi^{-1}\Bz) \Big \rangle d\Gamma +
\langle \By,\mathcal{A}^*\Bz \rangle_{L^2( G;\mathbb R^N)}
\\
=& -\int_{\Gamma } \begin{pmatrix}
\wt{\By}_+^\top & \wt{\By}_0^\top & \wt{\By}_-^\top
\end{pmatrix}\begin{pmatrix}
\Lambda_+(x) & 0 & 0 \\
0  & 0 & 0 \\
0 & 0 & \Lambda_-(x)
\end{pmatrix}\begin{pmatrix}
\wt{\Bz}_+ \\
\wt{\Bz}_0 \\
\wt{\Bz}_-
\end{pmatrix}
+\langle \By,\mathcal{A}^*\Bz \rangle_{L^2( G;\mathbb R^N)} \\
=&\langle \By,\mathcal{A}^*\Bz \rangle_{L^2( G;\mathbb R^N)},
\end{align*}\vspace{-2mm}
where we use the fact that $\Pi^{-1}=\Pi^{\top}$ in the third equality sign, and $\wt{\By}_-=0$ and $\wt{\Bz}_+=0$ in the last equality sign.
\end{proof}

\medskip

With the aforementioned preparations, we are now in a position to prove \Cref{C0semigroup}.

\begin{proof}[Proof of Lemma \ref{C0semigroup}]
We proceed in two steps.

\textbf{Step 1. Density and closedness of $\mathcal A$.}

The domain $D(\mathcal A)$ is dense in $L^2(G;\mathbb R^N)$ because it contains $C_0^\infty(G;\mathbb R^N)$.

To prove closedness, let $\{\mathbf y^k\}_{k=1}^{\infty}\subset D(\mathcal A)$ satisfy\vspace{-2mm}
$$
\mathbf y^k \xrightarrow{L^2} \mathbf y,\qquad
\mathcal A\mathbf y^k \xrightarrow{L^2} g,\vspace{-2mm}
$$
for some $\mathbf y,g\in L^2(G;\mathbb R^N)$.
In the sense of distributions, $\mathcal A\mathbf y^k\to\mathcal A\mathbf y$; hence the distributional limit of $\mathcal A\mathbf y^k$ coincides with $\mathcal A\mathbf y$. Since the limit is also $g$, we obtain $\mathcal A\mathbf y = g$ in $L^2(G;\mathbb R^N)$. Thus $\mathcal A\mathbf y\in L^2(G;\mathbb R^N)$ and $\mathcal A\mathbf y^k\to\mathcal A\mathbf y$ in $L^2(G;\mathbb R^N)$.

Now we apply Lemma \ref{L2} to the difference $\mathbf y^k-\mathbf y$:\vspace{-2mm}
$$
\begin{aligned}
	\| (\mathbf y^k)_{-} - \widetilde{\mathbf y}_{-}\|_{L^2(\Gamma ;\mathbb R^{n_{-}})}
	&\le C\Big\| \Big(\sum_{i=1}^n \nu_i(\cdot)A_i(\cdot)\Big)(\mathbf y^k-\mathbf y)\Big\|_{L^{2}(\Gamma ;\mathbb R^N)} \\
	&\le C\bigl(\|\mathbf y^k-\mathbf y\|_{L^2(G;\mathbb R^N)}+\|\mathcal A\mathbf y^k-\mathcal A\mathbf y\|_{L^2(G;\mathbb R^N)}\bigr)
	\to 0.
\end{aligned}\vspace{-2mm}
$$
Because $(\mathbf y^k)_{-}=0$ for each $k$, the limit gives $\widetilde{\mathbf y}_{-}=0$ in $L^2(\Gamma ;\mathbb R^{n_{-}})$. Hence $\mathbf y\in D(\mathcal A)$, and $\mathcal A$ is closed.

\textbf{Step 2. Quadratic estimates for $\mathcal A$ and its adjoint.}

For $\mathbf y\in D(\mathcal A)$, integration by parts yields\vspace{-2mm}
$$
\begin{aligned}
	\langle \mathcal A\mathbf y,\;\mathbf y\rangle_{L^2(G;\mathbb R^N)}
	&=-\sum_{i=1}^n \int_G \langle \mathbf y_{x_i},\; A_i(x)\mathbf y\rangle\,dx \\
	&=-\frac12\int_{\Gamma } \Big\langle \mathbf y,\; \Big(\sum_{i=1}^n \nu_i A_i\Big)\mathbf y\Big\rangle d\Gamma
	+\frac12\int_{G} \Big\langle \mathbf y,\; \Big(\sum_{i=1}^n A_{i,x_i}\Big)\mathbf y\Big\rangle dx .
\end{aligned}\vspace{-2mm}
$$
Using the decomposition $\widetilde{\mathbf y}=\Pi^{-1}\mathbf y$ and the boundary condition $\widetilde{\mathbf y}_{-}=0$,\vspace{-2mm}
$$
\Big(\sum_{i=1}^n \nu_i A_i\Big)\mathbf y\Big|_{\Gamma }
= \Pi\begin{pmatrix}\Lambda_+ \widetilde{\mathbf y}_+ \\ 0 \\ 0\end{pmatrix}.\vspace{-2mm}
$$
Therefore,\vspace{-2mm}
$$
\begin{aligned}
	\langle \mathcal A\mathbf y,\;\mathbf y\rangle_{L^2(G;\mathbb R^N)}
	&= -\frac12\int_{\Gamma } \langle \widetilde{\mathbf y}_+,\;\Lambda_+\widetilde{\mathbf y}_+\rangle d\Gamma
	+\frac12\int_{G} \Big\langle \mathbf y,\; \Big(\sum_{i=1}^n A_{i,x_i}\Big)\mathbf y\Big\rangle dx  \le M\|\mathbf y\|_{L^2(G;\mathbb R^N)}^2,
\end{aligned}\vspace{-2mm}
$$
with a constant $M>0$ depending only on the matrices $A_i$.

For the adjoint operator, let $\mathbf z\in D(\mathcal A^*)$. A similar computation gives\vspace{-2mm}
$$
\begin{aligned}
	\langle \mathcal A^*\mathbf z,\;\mathbf z\rangle_{L^2(G;\mathbb R^N)}
	&=\frac12\int_{\Gamma } \Big\langle \mathbf z,\; \Big(\sum_{i=1}^n \nu_i A_i\Big)\mathbf z\Big\rangle d\Gamma
	+\frac12\int_{G} \Big\langle \mathbf z,\; \Big(\sum_{i=1}^n A_{i,x_i}\Big)\mathbf z\Big\rangle dx .
\end{aligned}\vspace{-2mm}
$$
Now $\widetilde{\mathbf z}_{+}=0$ on $\Gamma$ (by the definition of $D(\mathcal A^*)$), so\vspace{-2mm}
$$
\Big(\sum_{i=1}^n \nu_i A_i\Big)\mathbf z\Big|_{\Gamma }
= \Pi\begin{pmatrix}0 \\ 0 \\ \Lambda_- \widetilde{\mathbf z}_-\end{pmatrix},\vspace{-2mm}
$$
and consequently\vspace{-2mm}
$$
\langle \mathcal A^*\mathbf z,\;\mathbf z\rangle_{L^2(G;\mathbb R^N)}
= \frac12\int_{\Gamma } \langle \widetilde{\mathbf z}_-,\;\Lambda_-\widetilde{\mathbf z}_-\rangle d\Gamma
+\frac12\int_{G} \Big\langle \mathbf z,\; \Big(\sum_{i=1}^n A_{i,x_i}\Big)\mathbf z\Big\rangle dx
\le M\|\mathbf z\|_{L^2(G;\mathbb R^N)}^2.\vspace{-2mm}
$$

\textbf{Conclusion.}
Steps 1 and 2 show that $\mathcal A$ is a densely defined, closed operator on $L^2(G;\mathbb R^N)$ and that both $\mathcal A$ and its adjoint $\mathcal A^*$ satisfy the dissipativity estimate\vspace{-2mm}
$$
\Re\,\langle \mathcal A u,\; u\rangle \le M\|u\|_{L^2(G;\mathbb R^N)}^2\;(\text{resp. } \Re\,\langle \mathcal A^* u,\; u\rangle \le M\|u\|_{L^2(G;\mathbb R^N)}^2),\qquad u\in D(\mathcal A)\;(\text{resp. } D(\mathcal A^*)).\vspace{-2mm}
$$
By the Lumer--Phillips theorem (see e.g., \cite[Corollary 2.2.3]{CZ12}), $\mathcal A$ generates a unique $C_0$-semigroup on $L^2(G;\mathbb R^N)$.
\end{proof}

\begin{remark} \label{RmkA1}
Since $L^2(G;\mathbb{R}^N)$ is a Hilbert space, it follows that $\mathcal A^*$ also generates a $C_0$-semigroup provided that $\mathcal A$ does.
\end{remark}
\end{appendices}

\end{document}